\newcommand{\cl}{\operatorname{\mathcal{C}\ell}}
\newcommand{\Pol}{\operatorname{\mathsf{Pol}}}
\newcommand{\R}{\mathbb{R}}
\newcommand{\C}{\mathbb{C}}
\newcommand{\Z}{\mathbb{Z}}
\newcommand{\mE}{\mathbb{E}}
\newcommand{\mS}{\mathbb{S}}
\newcommand{\mcS}{\mathcal{S}}
\newcommand{\sym}{\mathfrak{sp}}
\newcommand{\spl}{\mathfrak{sl}}
\newcommand{\so}{\mathfrak{so}}
\newcommand{\ux}{\underline{x}}
\newcommand{\uy}{\underline{y}}
\newcommand{\up}{\underline{\partial}}
 \newtheorem{thm}{Theorem}[section]
 \newtheorem{cor}[thm]{Corollary}
 \newtheorem{lem}[thm]{Lemma}
 \theoremstyle{definition}
 \newtheorem{defn}[thm]{Definition}
 \theoremstyle{remark}
 \newtheorem{rem}[thm]{Remark}
  \newtheorem{notation}[thm]{Notation}
 \numberwithin{equation}{section}
\begin{document}

%
%
%
%
%
%
%
%
%

\title[The Symplectic Fueter-Sce Theorem]
 {The Symplectic Fueter-Sce Theorem}

\author{David Eelbode}
\email{david.eelbode@uantwerpen.be}

\author{Sonja Hohloch}
\email{sonja.hohloch@uantwerpen.be}

\author{Guner Muarem}
\email{guner.muarem@uantwerpen.be}

\address{\\ University of Antwerp \\ Department of Mathematics\\ Middelheim Campus - Building G \\
Middelheimlaan 1\\ 2020 Antwerpen - Belgium}
\subjclass{15A66, 53D05}

\keywords{Symplectic Dirac operator, Symmetry operators, Fueter theorem, Representation theory}

\date{August 1, 2019}

\begin{abstract}
In this paper we present a symplectic analogue of the Fueter theorem. This allows the construction of special (polynomial) solutions for the symplectic Dirac operator $D_s$, which is defined as the first-order $\sym(2n)$-invariant differential operator acting on functions on $\R^{2n}$ taking values in the metaplectic spinor representation. 
\end{abstract}

\maketitle

\section{Introduction}
Fueter's original theorem in quaternionic analysis (see \cite{F}) is a result which allows the construction of so-called regular functions (solutions for a generalised Cauchy-Riemann operator $D = \partial_{x_0} + i\partial_{x_1} + j\partial_{x_2} + k\partial_{x_3}$ acting on smooth $\mathbb{H}$-valued functions), starting from holomorphic functions $f(z)$ in the complex plane $\C$. In sharp contrast to the space of holomorphic functions, the space containing regular functions does not define an algebra; therefore, having a systematic way to construct null solutions is clearly a powerful tool. The Fueter theorem does just that, and for that reason this result has later been generalized within the setting of Clifford analysis, a branch of analysis in which null solutions for spin-invariant differential operators are studied. Without claiming completeness, we refer to e.g. \cite{Sc,KQS, PPS, Q, S}. Note that the classical Fueter theorem was generalised by Sce \cite{Sc} to the Clifford setting. This result is now know as the Fueter-Sce theorem.
\\ 
\noindent
The main object of study in this subdomain of classical analysis is the Dirac operator $\up_x := \sum_j e_j \partial_{x_j}$, with $\{e_1,\ldots,e_m\}$ an orthonormal basis for $\R^m$ (the uninitiated reader is referred to the standard books \cite{BDS, DSS, GM}). This first-order operator is the unique (up to normalisation) conformally invariant operator acting on smooth spinor-valued functions $f(\ux) \in \mathcal{C}^\infty(\R^m,\mS)$ on the Euclidean space $\R^m$. The spinor space $\mathbb{S}$ can hereby be realised as a minimal left ideal in the complex Clifford algebra ${\mathcal{C}\ell(\C^m)}$ {(denoted by $\C_m$ from now on)}, defined as the associative algebra generated by the orthonormal basis $\{e_1,\ldots,e_m\}$ endowed with the multiplication rules $e_pe_q + e_qe_p = -2\delta_{pq}$ (the minus sign is a convention here, so that one ends up with several anti-commuting complex units).
 In even dimensions $m = 2n$ (which, as we will see in {S}ection 2, is enough for our purposes), this can be done by means of a suitable multiplication from the right with a primitive idempotent $I$ (see e.g. \cite{DSS}). Indeed, one has that $\mS := \C_m I$. This means that one can safely work in the Clifford algebra $\C_m$, and multiply everything with $I$ at the end of the calculations. 
{We further} note that the Clifford multiplication rules also lead to the crucial relation $-\up_x^2 = \Delta_x$ (the Laplace operator in $m$ variables), which explains why Clifford analysis is sometimes referred to as a `refinement' of harmonic analysis on $\R^m$. \\  
{ One of the main goals of Clifford analysis is to obtain solutions of the Dirac operator $\up_x$ and study their properties. In order to avoid problems with the topology of underlying functions spaces, we will always restrict ourselves to polynomial solutions of the operator $\up_x$. We denote the set of all Clifford algebra-valued monogenic polynomials of degree $k\in\mathbb{N}$ by $\mathcal{M}_k(\R^m,\mathbb{S})$. In other words, we have that \begin{align*}
	\mathcal{M}_k(\R^m,\mathbb{S}) = \Pol_k(\R^m,\mathbb{S}) \cap \ker \up_x.
	\end{align*}
}
\noindent
The Fueter theorem on $\R^m$ (together with its various generalisations) then yields a method to construct { these null-solutions} starting from arbitrary holomorphic functions $f(z)$. These solutions, which can be expressed in terms of the Gegenbauer polynomials, play an important role in the representation theory for the spin group Spin$(m)$, in particular within the setting of branching rules and axially monogenic polynomials on $\R^m$. This connection between the Fueter theorem and the representation theory for Lie algebras became even more apparent in \cite{ESVL}: in this paper, the construction of the special Fueter solutions for the operator $\up_x$ was obtained as an intertwining operator between certain lowest-weight modules for the Lie algebra ${\spl(2)}$. This was based on the idea that the Lie algebra ${\spl(2)}$ can be realised as a subalgebra of the conformal Lie algebra $\mathfrak{so}(1,m+1)$, which contains all the (first-order) generalised symmetries for the Dirac operator. The main ideas behind this construction can be found in Section 2.  \\
\noindent
{ We further emphasise that} the Fueter operator $D$ also appears in other domains of mathematics. For instance, the differential operator $\underline{D} := i\partial_{x_1} + j\partial_{x_2} + k\partial_{x_3}$ (without the term in $x_0$) plus a zero order Hamiltonian term $H_0$ can be used to describe the critical points of a hypersymplectic action functional. If one then reintroduces the coordinate $x_0$ as a time variable, the non-homogeneous Fueter equation $(D + H_0)f = 0$ can be seen as the $L^2$-gradient flow equation associated to the hypersymplectic action functional, viewed as a Morse function. The flow lines are the so-called connecting Floer cylinders in Hyperk\"ahler Floer theory: this homology can be calculated in certain situations as the Morse homology of the underlying Hyperk\"ahler manifold. Via Arnold's conjecture, this can then be used to estimate the number of solutions of the equation $(\underline{D} + H_0)f = 0$ mentioned above by the sum of the Betty numbers of the manifold (see the paper \cite{HNS} by Hohloch et al.). The operator $\underline{D}$ also appeared in the work \cite{W} of Walpuski, where compactness issues were studied, and Haydys' thesis \cite{Ha}, in connection with tri-holomorphicity and the Seiberg-Witten equations. \\
\noindent
The aim of the present paper is to generalise the approach from \cite{ESVL}, for the specific case of the symplectic Dirac operator $D_s$ which was introduced and studied in \cite{DBHS, DBSS}.
 This is a first-order differential operator, acting on functions $f(\ux,\uy)$ on $\R^{2n}$ taking values in the spinor representation for the symplectic Lie algebra $\sym(2n)$, which commutes with the regular action of said Lie algebra (see Section 3 for more details). Using the fact that one can also construct generalised symmetries for the operator $D_s$, we will be able to formulate a symplectic version of the Fueter theorem, which will then lead to a special collection of solutions for the operator $D_s$.
 { Note that this are the first steps towards a better understanding of the representation theoretical aspects of the space $\ker D_s$. Furthermore, it allows us to obtain results concerning the interaction with special functions and to comprehend how these functions change when the underlying geometry of the space is altered (i.e. from an orthogonal to a symplectic framework).}
 { One of the main} differences with the orthogonal case (centered around the Dirac operator and the spin group) will be the fact that one cannot start from holomorphic functions $f(z)$; instead one has to consider other special functions as a starting point. This will be shown in Section 4. 
{In the future we would like to better understand the symplectic Fueter mapping $\mathcal{F}_s$ by introducing an inverse Fueter mapping theorem in the context of \cite{CSS}.}
\begin{notation}
{We fix some elementary notations for the rest of this article. \begin{itemize}
	\item First of all we note that every Lie algebra is assumed to be the complex version, unless otherwise stated by using a subindex. This will be in particular the case for the conformal algebra $\so_{\mathbb{R}}(1,m+1)$.
	\item When writing $\mathbb{N}_0$, $\mathbb{R}_0$, etc. we \textit{exclude} the number $0$ from the set.
\end{itemize}}
\end{notation}

\section{The classical Fueter theorem}

To illustrate the method we will use to construct special solutions for the symplectic Dirac operator $D_s$, we will first briefly consider the orthogonal case again.  

As mentioned earlier, one possible approach to prove Fueter's theorem for the Dirac operator $\up_x$ is based on the existence of a particular subalgebra ${\spl(2)} \subset \mathfrak{so}(1,m+1)$ of the full conformal Lie algebra containing the (first-order) generalised symmetries. Introducing $\mathcal{W}(\R^m) \otimes \C_m$ as the tensor product of the Weyl algebra (with $2m$ generators $x_j$ and $\partial_{x_j}$) and the Clifford algebra, these are defined as follows: 
\begin{defn}
An operator $D$ of the special form 
\[ {D := \sum_{j = 1}^m p_j(x_k,{\partial_x}_k) \otimes c_j\in \mathcal{W}(\R^m) \otimes \C_m\ ,} \] 
 is called a generalised symmetry for the Dirac operator $\up_x$ if it satisfies the following criterion: 
\[ \big(\exists D' \in \mathcal{W}(\R^m) \otimes \C_m : \up_x D = D'\up_x\big)\ . \]
As such, one clearly has that $D \in \mathsf{End}(\ker \up_x)$. In the specific case where $D' = D$, one says that $D$ is a proper symmetry (with $[D,\up_x] = 0$).
\end{defn}
\noindent
Easy examples of proper symmetries are the generators $\partial_{x_j}$ of translations in the $e_j$-direction, and the generators 
\[ M_{ij} := x_i\partial_{x_j} - x_j\partial_{x_i} - \frac{1}{2}e_{ij} \in \mathcal{W}(\R^m) \otimes \C_m \qquad (1 \leq i < j \leq m) \] 
for the regular action of the Lie algebra $\so(m)$ on spinor-valued functions $f(\ux)$. However, in order to prove Fueter's theorem using algebraic methods, one needs the so-called special conformal transformations. These are typically defined in terms of the Kelvin inversion operator: 
\begin{defn}
For arbitrary $\alpha\in \R$, the $\alpha$-deformed inversion operator on $\C_m$-valued functions $f(\ux)$ is defined by 
\begin{align}
\mathcal{I}_{\alpha}^{(m)}[f](\ux):=\frac{\ux}{|\ux|^{m+\alpha}}f\left(\frac{\ux}{|\ux|^2}\right).
\end{align}
\end{defn}
\noindent
For $\alpha = 0$, this operator reduces to the aforementioned Kelvin inversion: 
\[ f(\ux) \in \ker(\up_x)\ \Rightarrow\ \mathcal{I}_{0}^{(m)}[f](\ux) = \frac{\ux}{|\ux|^{m}}f\left(\frac{\ux}{|\ux|^2}\right) \in \ker(\up_x)\ . \]
Put differently, one has $\mathcal{I}_{0}^{(m)} \in \mathsf{End}(\ker\up_x)$. It might be useful to point out here that the domain of the original function $f(\ux)$ and its image under the Kelvin inversion are not necessarily the same (polynomials are for instance mapped to rational functions {when $\alpha$ is a positive integer}). What is even more important for what follows, is the following observation (we restrict our attention to polynomials below, but this will turn out to be sufficient for our purposes): 
\begin{lem}
Let $\alpha = -2k$ with $k \in \Z^+$, and let $P(\ux)$ be a polynomial on $\R^m$ with values in $\C_m$, such that $\up_x^{2k+1} P(\ux) = 0$. One then has that 
\[ \mathcal{I}_{-2k}^{(m)}[P](\ux) \in \ker(\up_x^{2k+1})\ . \]
\end{lem}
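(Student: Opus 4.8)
The plan is to reduce to the case of a monogenic homogeneous polynomial $P$, and then to keep track of how the $\alpha$-deformed inversion interacts with powers of $\up_x$; the role of the special value $\alpha=-2k$ is that it forces a whole string of intertwining constants to vanish.

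First I would record, via a direct chain-rule computation (using $\up_x\ux=-m$, $\up_x|\ux|^{2\lambda}=2\lambda|\ux|^{2\lambda-2}\ux$, the relation $e_j\ux=-\ux e_j-2x_j$, and the substitution $\ux\mapsto\ux/|\ux|^2$), the two intertwining identities
\[
\up_x\,\mathcal{I}_\alpha^{(m)}[f]=\frac{\alpha}{|\ux|^{m+\alpha}}\,f\!\Big(\tfrac{\ux}{|\ux|^2}\Big)-\mathcal{I}_{\alpha+2}^{(m)}[\up_x f],
\]
and, for the auxiliary scalar inversion $\mathcal{J}_\alpha^{(m)}[f](\ux):=|\ux|^{-m-\alpha}f(\ux/|\ux|^2)$,
\[
\up_x\,\mathcal{J}_\alpha^{(m)}[f]=-(m+\alpha)\,\mathcal{I}_{\alpha+2}^{(m)}[f]+\mathcal{J}_{\alpha+2}^{(m)}[\up_x f]-2\,\mathcal{I}_{\alpha+2}^{(m)}[\mE f],
\]
with $\mE=\sum_j x_j\partial_{x_j}$ the Euler operator. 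When $f$ is monogenic and homogeneous of degree $d$ these collapse to $\up_x\mathcal{I}_\alpha^{(m)}[f]=\alpha\,\mathcal{J}_\alpha^{(m)}[f]$ and $\up_x\mathcal{J}_\alpha^{(m)}[f]=-(m+\alpha+2d)\,\mathcal{I}_{\alpha+2}^{(m)}[f]$, hence by iteration
\[
\up_x^{2k}\,\mathcal{I}_\beta^{(m)}[f]=\Big(\prod_{i=0}^{k-1}\big[-(\beta+2i)(m+\beta+2i+2d)\big]\Big)\,\mathcal{I}_{\beta+2k}^{(m)}[f].
\]

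Next I would reduce to the monogenic case. By linearity it suffices to take $P$ homogeneous; then by the Fischer decomposition $P=\sum_{j\ge 0}\ux^{\,j}P_j$ with $P_j$ monogenic and homogeneous, together with the fact that $\up_x(\ux^{\,j}P_j)$ is a nonzero multiple of $\ux^{\,j-1}P_j$ for $j\ge 1$, the hypothesis $\up_x^{2k+1}P=0$ forces $P=\sum_{j=0}^{2k}\ux^{\,j}P_j$. So it is enough to treat a single monogenic homogeneous $\ux^{\,j}P_j$ with $0\le j\le 2k$ and $\deg P_j=d$. Using $(\ux/|\ux|^2)^{\,j}=\ux^{\,j}/|\ux|^{2j}$ and $\ux^2=-|\ux|^2$ one finds that $\mathcal{I}_{-2k}^{(m)}[\ux^{\,j}P_j]$ is a nonzero scalar multiple of $\mathcal{I}_{\,j-2k}^{(m)}[P_j]$ when $j$ is even and of $\mathcal{J}_{\,j-1-2k}^{(m)}[P_j]$ when $j$ is odd. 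Applying $\up_x^{2k+1}=\up_x\circ\up_x^{2k}$ --- and, for odd $j$, first one $\up_x$ via $\up_x\mathcal{J}_\alpha^{(m)}[P_j]=-(m+\alpha+2d)\mathcal{I}_{\alpha+2}^{(m)}[P_j]$ --- it suffices to show $\up_x^{2k}\mathcal{I}_\beta^{(m)}[P_j]=0$, where $\beta=j-2k$ for $j$ even and $\beta=j+1-2k$ for $j$ odd. For $1\le j\le 2k$ this $\beta$ is an even integer with $2-2k\le\beta\le 0$, so one of the factors $\beta+2i$ with $0\le i\le k-1$ in the product above vanishes, and we are done. The case $j=0$ is the only exception: there $\up_x^{2k}\mathcal{I}_{-2k}^{(m)}[P_0]=C\,\mathcal{I}_0^{(m)}[P_0]$ for some constant $C$, but since $P_0\in\ker\up_x$ the Kelvin inversion property $\mathcal{I}_0^{(m)}\in\mathsf{End}(\ker\up_x)$ recorded in the text gives $\up_x\big(C\,\mathcal{I}_0^{(m)}[P_0]\big)=0$. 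Summing over $j$ gives $\up_x^{2k+1}\mathcal{I}_{-2k}^{(m)}[P]=0$.

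The real work is the first step: the non-commutativity of the Clifford generators and the bookkeeping of the inversion $\ux\mapsto\ux/|\ux|^2$ make the signs, and especially the Euler-operator correction term in the second identity, delicate to get right. Conceptually there is no genuine obstacle, though --- once those identities are available, the statement is essentially forced by the elementary observation that the arithmetic progression $\beta,\beta+2,\dots,\beta+2(k-1)$ of intertwining constants passes through $0$ precisely when the parameter $\alpha=-2k$ is matched to the exponent $2k+1$, the single leftover case being absorbed by the classical Kelvin inversion.
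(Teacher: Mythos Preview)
Your argument is correct, but it is considerably heavier than what the paper actually does. Both proofs start the same way: reduce by homogeneity and the Fischer decomposition to a single summand $\ux^{\,a}M$ with $M$ monogenic and $0\le a\le 2k$. At this point, however, the paper dispenses with all the intertwining calculus you develop. The key observation is the purely algebraic identity $|\ux|^{2j}=(-1)^{j}\ux^{2j}$ in the Clifford algebra: applying it to
\[
\mathcal{I}_{-2k}^{(m)}[\ux^{\,a}M](\ux)=\frac{\ux}{|\ux|^{m-2k}}\left(\frac{\ux}{|\ux|^2}\right)^{a}M\!\left(\frac{\ux}{|\ux|^2}\right)
=\frac{\ux^{a+1}}{|\ux|^{m-2k+2a}}\,M\!\left(\frac{\ux}{|\ux|^2}\right)
\]
one absorbs the excess factor $|\ux|^{2(k-a)}$ into a power of $\ux$ and recognises the result as a constant times $\ux^{\,2k-a}\,\mathcal{I}_0^{(m)}[M]$. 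Since $0\le 2k-a\le 2k$ and $\mathcal{I}_0^{(m)}[M]\in\ker\up_x$, the claim $\up_x^{2k+1}(\cdots)=0$ is immediate from the Fischer ladder. No $\mathcal{J}_\alpha^{(m)}$, no product formula, no case split on the parity of $j$ is needed.

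What your approach buys in exchange for the extra work is an explicit iterated intertwining relation $\up_x^{2k}\mathcal{I}_\beta^{(m)}[f]=\big(\prod_i\cdots\big)\mathcal{I}_{\beta+2k}^{(m)}[f]$ valid for arbitrary monogenic $f$, which is of independent interest and makes transparent \emph{why} the special value $\alpha=-2k$ is singled out (the arithmetic progression of shifts hits zero). The paper's proof hides this mechanism behind the Clifford identity $|\ux|^2=-\ux^2$ but gets to the conclusion in two lines.
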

\begin{proof} Without loss of generality, we may consider a homogeneous polynomial, say of degree $\ell \in \Z^+$. We can then decompose $P(\ux)$ into a sum of monogenic polynomials (this is the well-known Fischer decomposition, see for instance \cite{DSS}). Given the fact that $\up_x^{2k+1} P(\ux) = 0$, one gets at most $(2k+1)$ terms here: 
\[ P(\ux) = M_\ell(\ux) + \ux M_{\ell-1}(\ux) + \ux^2 M_{\ell-2}(\ux) + \ldots + \ux^{2k}M_{\ell - 2k}(\ux)\ , \]
where {$M_i(\ux)\in\mathcal{M}_i(\R^m,\mathbb{S})$  with $i$ the appropriate degree of homogeneity and} $M_{\ell - j}(\ux) = 0$ if the corresponding index $j > \ell$. It is then clear that 
\begin{align*}
\mathcal{I}_{-2k}^{(m)}[\ux^a M_{\ell-a}](\ux) &= \frac{\ux}{|\ux|^{m-2k}}\left(\frac{\ux}{|\ux|^2}\right)^aM_{\ell-a}\left(\frac{\ux}{|\ux|^2}\right)\\
&= \ux^a \mathcal{I}_{0}^{(m)}[M_{\ell-a}](\ux)\ ,
\end{align*}
with $0 \leq a \leq 2k$. This means that the action of $\up_x^{2k+1}$ will give zero. 
\end{proof}
\noindent
Despite the fact that the ($\alpha$-deformed) inversion does not preserve the space of polynomials, the conjugation of a partial derivative $\partial_{x_j}$ with this inversion does give an operator which behaves nicely on ($\C_m$-valued) polynomials. In what follows we will fix $j = 1$, which means that the $e_1$-direction plays a preferential role from now on. This is also reflected in the notation, since we will from now on write 
\[ \ux = e_1x_1 + \sum_{j = 2}^m e_j x_j = e_1 x_1 + \boldsymbol{x} \in \R \oplus \R^{m-1}\ {.} \]
	{ Note that we use the notation $\boldsymbol{x}$ to stress the fact that we take an element in $\R^{m-1}$ (in contrast with an vector in the full space $\ux\in \R^m$) where we cancelled out the preferred $e_1$-direction.}
\begin{rem}
The fact that one singles out a preferential direction (both here and in the symplectic case, see below) has its repercussions on the level of representation theory: it means that the behaviour of the Fueter solutions for the Dirac operator $\up_x$ will have some sort of invariance property with respect to the subalgebra $\so(m-1) \subset \so(m)$. 
\end{rem}
\noindent
For the following result, we refer to \cite{ESVL2}. The operator $\mathbb{E}$ appearing below is defined as the so-called Euler operator $\sum_j x_j\partial_{x_j} = r\partial_r$ (with $r = |\ux|$), which acts diagonally on homogeneous functions {(it measures the degree of homogeneity)}. 
\begin{thm}\label{raising_op}
For an arbitrary real parameter $\alpha$, the raising operator $R^{[m,\alpha]}$ in $\mathcal{W}(\R^m) \otimes \C_m$ is defined by means of
\begin{align}\label{con}
R^{[m,\alpha]} &:= \mathcal{I}_{\alpha}^{(m)}\partial_{x_1}\mathcal{I}_{\alpha}^{(m)}\nonumber\\ 
&= -|\ux|^{2} \partial_{x_1} +x_1( 2\mathbb{E} +m-1+\alpha) +\boldsymbol{x} e_1\ .
\end{align}
{For} $\alpha=-2k$, {with $k\in\mathbb{N}$} this operator plays a special role: 
\[ R^{[m,-2k]} = \mathcal{I}_{-2k}^{(m)}\partial_{x_1}\mathcal{I}_{-2k}^{(m)} \in \mathsf{End}(\ker\up_x^{2k+1})\ . \]
\end{thm}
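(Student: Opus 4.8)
The plan is to deal with the two assertions of the statement separately: first derive the closed form \eqref{con} for $R^{[m,\alpha]} := \mathcal{I}_{\alpha}^{(m)}\partial_{x_1}\mathcal{I}_{\alpha}^{(m)}$ by a direct computation, and then, for the value $\alpha = -2k$, deduce $R^{[m,-2k]}\in\mathsf{End}(\ker\up_x^{2k+1})$ from the Lemma above.

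For \eqref{con} I would exploit homogeneity. Since both sides of \eqref{con} are $\C$-linear it is enough to apply $R^{[m,\alpha]}$ to an arbitrary homogeneous $\C_m$-valued polynomial $P$ of degree $\ell$, and on such a $P$ the deformed inversion collapses to a multiplication: from $P(\ux/|\ux|^2) = |\ux|^{-2\ell}P(\ux)$ one gets $\mathcal{I}_{\alpha}^{(m)}[P](\ux) = |\ux|^{-(m+\alpha+2\ell)}\,\ux\,P(\ux)$. Applying $\partial_{x_1}$ by the Leibniz rule, using $\partial_{x_1}\ux = e_1$ and $\partial_{x_1}|\ux|^{t} = t\,x_1|\ux|^{t-2}$, produces a function that is again homogeneous, so $\mathcal{I}_{\alpha}^{(m)}$ may be applied a second time in the same collapsed form; after simplifying with $\ux^2 = -|\ux|^2$ all powers of $|\ux|$ telescope and one is left with $-|\ux|^2\partial_{x_1}P + (m+2\ell+\alpha)\,x_1 P + \ux e_1 P$. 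Finally rewriting $\ux e_1 = -x_1 + \boldsymbol{x}e_1$ and reading $2\ell$ as $2\mathbb{E}$ turns this into the right-hand side of \eqref{con}; by linearity it holds on all of $\Pol(\R^m,\C_m)$ and hence identifies $R^{[m,\alpha]}$. The only things to be careful about here are the bookkeeping of the $|\ux|$-powers and keeping left- and right-multiplication in $\C_m$ apart.

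For the second assertion fix $\alpha = -2k$. By \eqref{con} the operator $R^{[m,-2k]}$ has polynomial coefficients, so it maps $\Pol(\R^m,\C_m)$ into itself; hence for a polynomial $P$ with $\up_x^{2k+1}P = 0$ the image $R^{[m,-2k]}[P]$ is a polynomial which, on $\R^m\setminus\{0\}$, coincides with $\mathcal{I}_{-2k}^{(m)}\partial_{x_1}\mathcal{I}_{-2k}^{(m)}[P]$. Since a polynomial vanishing on $\R^m\setminus\{0\}$ vanishes identically, it is enough to check that $\up_x^{2k+1}$ annihilates this composition away from the origin, and I would read that off factor by factor: by the Lemma $\mathcal{I}_{-2k}^{(m)}[P]\in\ker\up_x^{2k+1}$; since $\partial_{x_1}$ commutes with $\up_x$, also $\partial_{x_1}\mathcal{I}_{-2k}^{(m)}[P]\in\ker\up_x^{2k+1}$; and one then applies $\mathcal{I}_{-2k}^{(m)}$ once more. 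The step I expect to be the main obstacle is exactly this last one, because the Lemma is stated for \emph{polynomials} whereas $\partial_{x_1}\mathcal{I}_{-2k}^{(m)}[P]$ is not one. I would resolve it by observing that, again by homogeneity, $\mathcal{I}_{-2k}^{(m)}[P] = |\ux|^{-(m+2\ell-2k)}\ux\,P$ for $P$ of degree $\ell$, so that $\partial_{x_1}\mathcal{I}_{-2k}^{(m)}[P]$ is a finite sum of terms $|\ux|^{t}M$ and $|\ux|^{t}\ux M$ with $M$ a monogenic polynomial; on this class the Fischer-decomposition computation in the proof of the Lemma applies verbatim (it only used such a decomposition, which is equally available for homogeneous functions on $\R^m\setminus\{0\}$), and shows that $\mathcal{I}_{-2k}^{(m)}$ of an element of $\ker\up_x^{2k+1}$ of this kind again lies in $\ker\up_x^{2k+1}$. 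Alternatively one can avoid the composition entirely and verify $\up_x^{2k+1}R^{[m,-2k]} = 0$ on $\ker\up_x^{2k+1}$ straight from \eqref{con}, commuting $\up_x$ past $R^{[m,-2k]}$ by means of the $\spl(2)$-relations between $\partial_{x_1}$, $x_1$ and $\mathbb{E}$; in either approach the reappearance of a non-polynomial null-solution is the point that needs work.
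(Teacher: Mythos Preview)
The paper does not supply a proof of this theorem at all: the sentence immediately before the statement reads ``For the following result, we refer to \cite{ESVL2}'', and no argument follows. So there is nothing in the paper to compare your proposal against beyond the preceding Lemma, which you already invoke.

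On its own merits your plan is sound. The derivation of \eqref{con} by restricting to a homogeneous polynomial $P$ of degree $\ell$, collapsing each $\mathcal{I}_\alpha^{(m)}$ to left multiplication by $|\ux|^{-(m+\alpha+2d)}\ux$ (with $d$ the current degree), and then simplifying with $\ux^2=-|\ux|^2$ and $\ux e_1=-x_1+\boldsymbol{x}e_1$, is the natural computation and gives exactly the stated formula. For the second assertion you have correctly identified the only genuine issue: the Lemma is phrased for polynomials, whereas after one pass of $\mathcal{I}_{-2k}^{(m)}$ and $\partial_{x_1}$ the input to the second $\mathcal{I}_{-2k}^{(m)}$ is not polynomial. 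Both of your proposed remedies work. The first (extending the Lemma's argument) goes through because the Fischer-type reduction in the Lemma only ever uses that the input is a finite sum of terms $\ux^a N$ with $N$ monogenic, and $\mathcal{I}_{-2k}^{(m)}$ sends such a term to $\pm\,\ux^{2k-a}\,\mathcal{I}_0^{(m)}[N]$, again of the same shape with exponent $\leq 2k$; applying $\partial_{x_1}$ keeps one in this class. The second remedy (commute $\up_x$ through the closed form \eqref{con} directly) avoids the factorisation entirely and is arguably cleaner. Either way, the gap you flagged is real but routine to close, and your outline is correct.
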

\noindent
In particular, when $k = 0$ we observe that $R^{[m,0]} \in \mathsf{End}(\ker\up_x)$. As a matter of fact, this is one of the special conformal transformations mentioned earlier (there are $m$ independent ones, obtained by starting from different partial derivatives $\partial_{x_j}$). Note that it is a {\em raising} operator (or a ladder operator), in the sense that its action on a homogeneous polynomial raises the degree by one. It can even be supplemented with a lowering operator, which then leads to a realisation of the Lie algebra ${\spl(2)}$ (see \cite{ESVL2}): 
\begin{align*}
{\spl(2)} = \mathsf{Alg}(X,Y,H) &\cong \mathsf{Alg}\big(R^{[m,\alpha]},-\partial_{x_1},2\mathbb{E} +m-1+\alpha\big)\ .
\end{align*}
The idea behind Fueter's theorem is that one can use the $\alpha$-deformed raising operator $R^{[m,\alpha]}$ to create polynomials which look like holomorphic powers, provided one chooses the appropriate index $\alpha$. To explain this, we first of all note that the Clifford product 
\[ \overline{e}_1\ux = \overline{e}_1(e_1x_1 + \boldsymbol{x}) = x_1 + \overline{e}_1\boldsymbol{x} \in \mathbb{R}^{(0)}_m\oplus \mathbb{R}_m^{(2)} \]
formally behaves like a complex number (the bar hereby denotes the Clifford conjugation, defined on generators by means of $\overline{e}_j=-e_j$). Indeed, putting $\boldsymbol{x} = |\boldsymbol{x}|\boldsymbol{\omega}$ with $\boldsymbol{\omega} \in S^{m-2}$ a unit vector in $\R^{m-1}$, the previous line can be rewritten as $\overline{e}_1\ux = x_1 + (\overline{e}_m\boldsymbol{\omega})|\boldsymbol{x}|$, with $(\overline{e}_m \boldsymbol{\omega})^2 = -1$. This then inspires the following substitution rule: 
\begin{align}
z = \overline{e}_1(e_1x_1 + e_2x_2) \in \mathbb{C}\mapsto \overline{e}_1\ux = x_1 + \overline{e}_1\boldsymbol{x} \in \mathbb{R}^{(0)}_m\oplus \mathbb{R}_m^{(2)}\ ,\label{sub}
\end{align}
where $z = x_1 + \overline{e}_1e_2 x_2$ is just another way to write the more common $x + iy$ in $\C$. This formal substitution can be used to turn holomorphic functions $f(z)$ into a $\C_m$-valued function on $\R^m$: starting from the Taylor expansion for $f(z)$ around the origin, it suffices to replace every holomorphic power $z^k$ by the Clifford power $(\overline{e}_1 \ux)^k$. The crux of the argument is encoded in the following result (see also \cite{ESVL2}), which says that the image of these holomorphic powers can also be obtained in terms of a deformed raising operator: 
\begin{thm}
For the special value $\alpha = 2-m$, one finds that 
\begin{align}
R^{[m,2-m]}(\overline{e}_1 \ux)^k=(k+1)(\overline{e}_1 \ux)^{k+1}.
\end{align}
\end{thm}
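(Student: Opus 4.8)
The plan is to prove the identity by a direct computation, starting from the explicit form of the raising operator in Theorem~\ref{raising_op}. For the value $\alpha = 2-m$ the quantity $2\mathbb{E}+m-1+\alpha$ becomes $2\mathbb{E}+1$, so the operator reduces to
\[ R^{[m,2-m]} = -|\ux|^{2}\,\partial_{x_1} + x_1\big(2\mathbb{E}+1\big) + \boldsymbol{x}\,e_1\ . \]
I would then abbreviate $w := \overline{e}_1\ux = x_1 + \overline{e}_1\boldsymbol{x}$ and introduce $\overline{w} := x_1 - \overline{e}_1\boldsymbol{x}$, its image under the Clifford conjugation, and carry out the whole calculation inside the commutative-looking subalgebra generated by $w$ and $\overline{w}$.

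The first ingredient consists of two elementary observations: $w$ is homogeneous of degree one, hence $\mathbb{E}\,w^{k} = k\,w^{k}$; and $\partial_{x_1}w = 1$ is a scalar and therefore central, so that $\partial_{x_1}w^{k} = k\,w^{k-1}$. The second ingredient is the list of algebraic identities coming from the splitting $\R\oplus\R^{m-1}$, in which $\boldsymbol{x}$ anticommutes with $e_1$ and $(\overline{e}_1\boldsymbol{x})^{2} = -|\boldsymbol{x}|^{2}$: namely $w+\overline{w} = 2x_1$, $w-\overline{w} = 2\,\boldsymbol{x}e_1$, and $w\,\overline{w} = \overline{w}\,w = |\ux|^{2}$. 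These allow me to rewrite $x_1$, $\boldsymbol{x}e_1$ and $|\ux|^{2}$ purely in terms of $w$ and $\overline{w}$.

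Substituting into the displayed formula, one finds that $R^{[m,2-m]}w^{k}$ is an $\R$-linear combination of $w^{k+1}$ and $\overline{w}\,w^{k}$: the term $-|\ux|^{2}\partial_{x_1}w^{k}$ contributes $-k\,\overline{w}\,w^{k}$; the term $x_1(2\mathbb{E}+1)w^{k}$ equals $\tfrac{2k+1}{2}(w+\overline{w})w^{k}$; and $\boldsymbol{x}e_1 w^{k}$ equals $\tfrac12(w-\overline{w})w^{k}$. Summing the coefficients, the term $w^{k+1}$ appears with coefficient $\tfrac{2k+1}{2}+\tfrac12 = k+1$, while $\overline{w}\,w^{k}$ appears with coefficient $-k+\tfrac{2k+1}{2}-\tfrac12 = 0$. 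This is exactly the claimed identity $R^{[m,2-m]}(\overline{e}_1\ux)^{k} = (k+1)(\overline{e}_1\ux)^{k+1}$.

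There is no real obstacle here: the computation is short. The only points that require care are the non-commutativity bookkeeping (keeping $\overline{w}$ to the left of $w^{k}$, and not prematurely collapsing $\overline{w}\,w^{k}$) and the sign conventions in the Clifford conjugation. It is worth recording the conceptual reason behind the special value of $\alpha$: repeating the computation for general $\alpha$, the coefficient of the conjugate term $\overline{w}\,w^{k}$ comes out as $\tfrac12(m-2+\alpha)$, independently of $k$, so it vanishes precisely when $\alpha = 2-m$; for any other $\alpha$ the image of $w^{k}$ retains a genuinely non-``holomorphic'' component, which is why this value is forced.
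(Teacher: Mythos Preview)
Your argument is correct. The identities $w+\overline{w}=2x_1$, $w-\overline{w}=2\boldsymbol{x}e_1$ and $w\overline{w}=\overline{w}w=|\ux|^2$ are exactly as stated, and the bookkeeping leading to the coefficients $k+1$ and $0$ is clean. Your remark at the end, computing the coefficient of $\overline{w}w^k$ for general $\alpha$ as $\tfrac12(m-2+\alpha)$, is a nice way to explain why the value $\alpha=2-m$ is singled out.

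As for comparison with the paper: the paper does not actually prove this theorem in the text; it merely states the result and points to \cite{ESVL2}. So there is nothing to compare against here beyond noting that your direct computation, phrased in terms of the ``holomorphic'' and ``anti-holomorphic'' combinations $w$ and $\overline{w}$, is a self-contained and elementary proof that could serve in place of the external reference.
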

\noindent
Fueter's theorem then easily follows from the observation that in case the dimension $m = 2\mu \in 2\Z^+$ is even, one can invoke Theorem \ref{raising_op} from above: 
\[ R^{[m,2-m]} = R^{[m,-2(\mu-1)]} \in \mathsf{End}(\ker\up_x^{2\mu - 1})\ . \]
Since $\up_x^{2\mu - 1} = (-1)^{\mu-1}\Delta_x^{\mu-1}\up_x$, this then means that $\Delta_x^{\mu - 1}f(\overline{e}_1 \ux) \in \ker(\up_x)$, with $f(z)$ a holomorphic function which has a Taylor expansion around the origin $z = 0 \in \C$. 
\begin{rem}
One might be tempted to argue here that $(\overline{e}_1 \ux)^\ell$ is not a spinor-valued function, but as explained in the introduction it suffices to multiply this $\C_m$-valued polynomial with the idempotent $I \in \C_m$ (or any other spinor, for that matter) from the right to remedy this. 
\end{rem}
\noindent
Since we will generalise this approach in the next sections, we list the crucial ideas behind this (algebraic) proof, so that we can use it as a guideline: 
\begin{itemize}
\item[(i)] Formally relate the undeformed raising operator in $m = 2$ to a deformed raising operator in general dimension: 
\[ R^{[2,0]} = R^{[m,2-m]}\ . \]
Note that this is a {\em formal} identification only, because these operators are defined in terms of the Clifford variable $\ux$ whose exact definition depends on the dimension. 
\item[(ii)] Prove that the deformed raising operator has the special property that it still preserves the kernel for a certain operator (which then happens to be a suitable power of the Dirac operator $\up_x$). 
\item[(iii)] Combine both ideas: let the raising operator in the lowest dimension act first (in the orthogonal case, this then gives holomorphic powers), perform the formal substitution ${z = \overline{e}_1(e_1x_1 + e_2x_2)\mapsto \overline{e}_1\ux = x_1 + \overline{e}_1\boldsymbol{x}}$ and act with a suitable power of the Dirac operator to obtain a solution for $\up_x$ defined on $\R^m$. 
\end{itemize}
Given the fact that the Dirac operator $\up_x$ generalises the Cauchy-Riemann operator $\overline{\partial}_z$ in a canonical way, it should come as no surprise that the Fueter theorem relates holomorphic functions to monogenic functions. There is also a second connection, which is situated on the level of special functions. For that purpose, we first of all note that 
\[ z^k = (x + iy)^k \sim \partial_z\big(|z|^{k+1}\cos((k+1)\theta)\big)\ , \]
and this trigonometric function can be seen as a Chebyshev polynomial of the first kind: $T_{k+1}(\cos \theta )=\cos((k+1)\theta)$, where $\Re(z) = |z|\cos\theta$ denotes the real part of $z \in \C$. Moreover, this is then nothing but a degenerate case of a Gegenbauer polynomial $C_k^\alpha(x)$, because it is well-known that 
\[ T_{k+1}(x) = \frac{k+1}{2}\lim_{\mu \rightarrow 0}\frac{1}{\mu}C_{k+1}^\mu(x) \qquad (k \geq 0)\ . \]
On the other hand, the Fueter images of holomorphic powers are (up to a constant) given by the special monogenic polynomials of the form 
\[ M_k(\ux) = \up_x\left(|\ux|^{k+1}C_{k+1}^{\frac{m}{2} - 1}\left(\frac{x_1}{|\ux|}\right)\right)\ , \]
where one can once again say that $x_1 = |\ux|\cos\theta$ (using a suitable spherical coordinate system in $m$ dimensions). This follows from representation theoretical arguments, see for instance \cite{ESVL}.  {  From the abstract branching rules, we have the following decomposition: 	\[ \mathcal{M}_k(\R^m,\mathbb{S}) \bigg|_{\so(m-1)}^{\so(m)}=\bigoplus_{j=0}^k\mathcal{M}_j(\R^{m-1},\mathbb{S})
	\]
	As $m=2n$ in this paper, we note that the spinor space $\mathbb{S}$ on the left-hand side should have a sign $\pm$.
The realisation of the trivial component $j=0$ is then exactly this special function from above.
}

From this point of view, the Fueter theorem says that two families of special functions (indexed by a positive integer $k \in \Z^+_0$) are related by a parameter defined in terms of the dimension of the space one is working with. 
\section{Symplectic Clifford analysis}

In this section, we will prove that a similar scheme arises in the symplectic setting: special solutions for the symplectic Dirac operator $D_s$ (see below) can be constructed in terms of solutions in the lowest possible dimension (which is again $m = 2$). This will again be done in terms of deformed raising operators, and it will lead to two related families of special functions. \\
\\
Let $(\R^{2n},\omega)$ be a symplectic vector space with coordinates 
\[ (\ux,\uy) := (x_1,\ldots,x_n,y_1,\ldots,y_n) \in \R^n \oplus \R^n \cong \R^{2n} \] 
and their associated partial derivatives $(\partial_{x_1},\ldots,\partial_{y_n})$, and where the skew bilinear form $\omega$ is defined by means of $\omega(\underline{u},\underline{v}) := \langle \underline{u},J(\underline{v}) \rangle$ with 
\[ J := \left(\begin{array}{cc}
0 & +\textup{Id}_n\\
-\textup{Id}_n & 0
\end{array}\right) \in \R^{2n \times 2n}\ . \]
We also need the symplectic version of the Clifford algebra {$\C_m$ used in the orthogonal case}.
\begin{defn}
Let $(V,\omega)$ be a symplectic vector space of dimension $2n$. The symplectic Clifford algebra $\cl(V,\omega)$ is defined as the quotient algebra of the tensor algebra $T(V)$ of $V$, by the two-sided ideal $\mathcal{I}_{\omega}$ generated by elements of the form $\{v\otimes u-u\otimes v+\omega(v,u)\mid u,v\in V\}$. In order words
\[ \cl(V,\omega):=T(V)/\mathcal{I}_{\omega} \] 
is the algebra generated by $V$ in terms of the relation $vu-uv=-\omega(v,u)$.
\end{defn}
\noindent
This algebra is isomorphic to the Weyl algebra $\mathcal{W}(\R^n)$, so in sharp contrast to the orthogonal case, the symplectic Clifford algebra has infinite dimension. This algebra has a natural action on the so-called symplectic spinor space, an infinite-dimensional irreducible representation which can be modelled by the Schwartz space $\mcS(\R^n)$, also known as the Segal-Shale-Weil representation (see also the remark below). The generators of the symplectic Clifford algebra then act on this module as multiplication operators $q_j$ and derivatives $\partial_{q_j}$: 
\begin{align*}
e_j \cdot \phi := iq_j\phi\quad \text{and} \quad e_{n+j}\cdot \phi := \partial_{q_j}\phi\ ,
\end{align*}
where $1 \leq j \leq n$ and $\phi \in \mcS(\R^n)$. The role of the orthogonal Lie algebra $\so(m)$ appearing in the classical case is now played by the symplectic Lie algebra $\mathfrak{sp}(2n)$. It has a natural regular action on the space of polynomial symplectic spinors $\Pol(\R^{2n},\C)\otimes \mathcal{S}(\R^n)$. Indeed, the generators for this representation are given by (see e.g. \cite{DBHS}):
 \begin{align}\label{realisaties}
\begin{cases}
X_{jk}&=-x_j\partial_{x_k}+y_k\partial_{y_j} + q_k\partial_{q_j}+\frac{1}{2}\delta_{jk}
\\Y_{jk}&=x_k\partial_{y_j}+x_j\partial_{y_k} + i\partial_{q_j}\partial_{q_k}
\\ Z_{jk}&=y_k\partial_{x_j}+y_j\partial_{x_k} + iq_jq_k
\\Y_{jj}&=-x_j \partial_{y_j} -\frac{i}{2}\partial_{q_j}^2
\\Z_{jj}&=-y_j\partial_{x_j}-\frac{i}{2}q_j^2
\end{cases}
\end{align} 
\begin{rem}
Note that the Schwartz space $\mcS(\R^n)$ decomposes into a direct sum of two unitary representations for the metaplectic Lie group $\mathsf{Mp}(2n,\mathbb{R})$, the double cover for the (real) symplectic Lie group $\mathsf{Sp}(2n,\R)$. This amounts to considering the even or odd functions $\psi(q_1,\ldots,q_n) \in \mcS(\R^n)$. See e.g. \cite{DBSS} for more information. 
\end{rem}
\noindent
One the most important results in (classical) Clifford analysis is the existence of a so-called Howe dual pair, consisting of the spin group (or its associated orthogonal Lie algebra) and a dual symmetry algebra which is generated by the invariant operators. A similar result exists in the symplectic case, and for that purpose we first introduce the natural invariant operators: 
\begin{align*}
X_s&=\sum_{j=1}^n y_j\partial_{q_j}+ix_jq_j
\\ D_s&=\sum_{j=1}^n iq_j\partial_{y_j}-\partial_{x_j}\partial_{q_j}
\\ \mathbb{E}&=\sum_{j=1}^n x_j\partial_{x_j}+y_j\partial_{y_j}.
\end{align*}
Note that the second operator is known as the symplectic Dirac operator, whereas the first operator is its dual. This is the analogue of the multiplication operator $\ux$ from the previous section, but note that the symplectic version is differential in the dummy variable $q$. 
\begin{rem}
The symplectic Dirac operator has also been considered in greater generality, on symplectic manifolds $(M,\omega)$ with an $Mp^c$-structure, by Cahen and Gutt et al.\ in for instance \cite{CH1, CH2, CH3}. For a more recent reference, we mention the work of Nita \cite{Ni}. 
\end{rem}
\noindent
In \cite{DBSS} the well-known Howe duality for the classical Dirac operator on $\R^m$ was generalised to the symplectic context, in such a way that the space of symplectic polynomial spinors decomposes into a direct sum of irreducible subspaces. To do so, the symplectic action defined above was supplemented by the dual symmetry algebra ${\spl(2)}$:
\[ {\spl(2)} = \mathsf{Alg}(X,Y,H) \cong \mathsf{Alg}\left(X_s,D_s,\mathbb{E} + n\right)\ . \] 
Indeed, one easily verifies that the following relations hold (note that these still need to be normalised to obtain the proper ${\spl(2)}$-relations): 
\[ [\mathbb{E} + n,X_s] = +X_s \qquad [\mathbb{E} + n,D_s] = -D_s \qquad [D_s,X_s] = -i(\mathbb{E} + n)\ . \]
Using the dual pair $\mathsf{Mp}(2n,\mathbb{R})\times \mathfrak{sl}(2,\mathbb{R})$, where $\mathsf{Mp}(2n,\mathbb{R})$ denotes the metaplectic group (the double cover for the symplectic group), one can decompose the space of polynomial symplectic spinors $\Pol(\R^{2n},\C)\otimes \mathcal{S}(\R^n)$ in terms of the following building blocks: 
\begin{defn} 
For arbitrary $\ell \in\Z^+$, the space of $\ell$-homogeneous symplectic (polynomial) monogenics is defined as
\[
\mathcal{M}_{\ell}^s(\R^{2n}) := \ker(D_s)\cap (\mathsf{Pol}_{\ell}(\R^{2n})\otimes\mathcal{S}(\mathbb{R}^n)).
\]
\end{defn}
\noindent
Just as for the space of symplectic spinors, these spaces decompose into two irreducible representations for the action of the metaplectic group (only for $\ell = 0$ they are unitary though). For the following result we refer to \cite{DBSS}: 
\begin{thm}\label{sym_decomp}
Under the action of the $\sym(2n)$, the space of polynomial symplectic spinors decomposes as
\begin{align}
\mathsf{Pol}(\R^{2n})\otimes\mathcal{S}(\mathbb{R}^n) &= \bigoplus_{j = 0}^\infty\bigoplus_{\ell = 0}^\infty X_s^j \mathcal{M}_{\ell}^s(\R^{2n})\ .
\end{align}
\end{thm}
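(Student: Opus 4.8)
The plan is to prove the stated decomposition by recognising it as a \emph{symplectic Fischer decomposition}, driven by the $\spl(2)$-triple $\mathsf{Alg}(X_s,D_s,\mathbb{E}+n)$ whose relations were recorded just above. Since $X_s$, $D_s$ and $\mathbb{E}$ are homogeneous of degree $+1$, $-1$ and $0$ in the variables $(\ux,\uy)$, the whole picture is compatible with the grading $\mathsf{Pol}(\R^{2n})\otimes\mathcal{S}(\R^n)=\bigoplus_{k\ge 0}V_k$, where $V_k:=\mathsf{Pol}_k(\R^{2n})\otimes\mathcal{S}(\R^n)$ and $\mathbb{E}+n$ acts on $V_k$ as the scalar $k+n$. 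It therefore suffices to prove, by induction on $k$, that $V_k=\bigoplus_{\ell=0}^{k}X_s^{\,k-\ell}\mathcal{M}_\ell^s(\R^{2n})$ and that each map $X_s^{\,k-\ell}\colon\mathcal{M}_\ell^s(\R^{2n})\to V_k$ is injective; summing over $k$ and reindexing $j:=k-\ell$ then yields the theorem. The base case $k=0$ is immediate: $D_s$ lowers the degree and $V_{-1}=0$, so $V_0=\mathcal{M}_0^s(\R^{2n})$.

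The computational heart of the induction is the ladder identity
\[
D_s\,X_s^{\,p}\,m \;=\; c_{p,\ell}\,X_s^{\,p-1}\,m,\qquad c_{p,\ell}:=-i\,p\bigl(n+\ell+\tfrac{p-1}{2}\bigr),
\]
valid for $m\in\mathcal{M}_\ell^s(\R^{2n})$ and $p\ge 1$. One proves it by induction on $p$, writing $D_s X_s^{\,p}m=X_s D_s X_s^{\,p-1}m+[D_s,X_s]X_s^{\,p-1}m$, using $[D_s,X_s]=-i(\mathbb{E}+n)$ together with the homogeneity of $X_s^{\,p-1}m$; the case $p=1$ reads $D_sX_sm=[D_s,X_s]m=-i(n+\ell)m$ since $D_sm=0$. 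The essential point is that $c_{p,\ell}\neq 0$ for all $p\ge 1$, and this is precisely where the hypothesis $n\ge 1$ — equivalently, positivity of the $\spl(2)$-lowest weight $n+\ell$ — enters: it guarantees that $X_s$ never reaches the top of a finite-dimensional $\spl(2)$-module, where it would start annihilating vectors, so that we are genuinely dealing with infinite-dimensional lowest-weight modules.

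Granting the inductive description of $V_{k-1}$, three consequences drop out. (i) Writing $g=\sum_{\ell=0}^{k-1}X_s^{\,k-1-\ell}m_\ell\in V_{k-1}$, the identity gives $D_sX_sg=\sum_{\ell}c_{k-\ell,\ell}\,X_s^{\,k-1-\ell}m_\ell$; since the decomposition of $V_{k-1}$ is direct and the maps $X_s^{\,k-1-\ell}|_{\mathcal{M}_\ell^s}$ are injective while $c_{k-\ell,\ell}\neq0$, we conclude $D_sX_sg=0\Rightarrow g=0$; hence $X_s\colon V_{k-1}\to V_k$ is injective and $\mathcal{M}_k^s(\R^{2n})\cap X_sV_{k-1}=0$. (ii) Given $f\in V_k$, decompose $D_sf\in V_{k-1}$ as $\sum_{\ell}X_s^{\,k-1-\ell}p_\ell$ by the induction hypothesis and put $g:=\sum_\ell c_{k-\ell,\ell}^{-1}X_s^{\,k-1-\ell}p_\ell\in V_{k-1}$; the ladder identity gives $D_sX_sg=D_sf$, so $f-X_sg\in\mathcal{M}_k^s(\R^{2n})$ and $V_k=\mathcal{M}_k^s(\R^{2n})+X_sV_{k-1}$. (iii) Combining (i) and (ii) yields $V_k=\mathcal{M}_k^s(\R^{2n})\oplus X_sV_{k-1}$, and feeding in $V_{k-1}=\bigoplus_{\ell=0}^{k-1}X_s^{\,k-1-\ell}\mathcal{M}_\ell^s(\R^{2n})$ (using injectivity of $X_s$ on $V_{k-1}$ to see $X_sV_{k-1}=\bigoplus_{\ell=0}^{k-1}X_s^{\,k-\ell}\mathcal{M}_\ell^s(\R^{2n})$) completes the inductive step; injectivity of $X_s^{\,k-\ell}$ on $\mathcal{M}_\ell^s(\R^{2n})$ follows since it is a composite of maps already shown injective. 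Summing over all $k$ gives the theorem.

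Finally, each summand $X_s^{\,j}\mathcal{M}_\ell^s(\R^{2n})$ is an $\sym(2n)$-submodule because $X_s$ and $D_s$ commute with the regular action \eqref{realisaties}, so this is indeed a decomposition into $\sym(2n)$-modules, as claimed; the further splitting of each $\mathcal{M}_\ell^s(\R^{2n})$ into two metaplectic constituents is then inherited from the even/odd decomposition of $\mathcal{S}(\R^n)$. I expect the only genuine subtlety — beyond bookkeeping — to lie in step~(ii): one must be certain that the explicit "inverse" $g$ really lies in $V_{k-1}$ and solves $D_sX_sg=D_sf$ exactly, which rests on having the precise decomposition of $V_{k-1}$ from the induction hypothesis and on all the scalars $c_{p,\ell}$ being invertible. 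Everything else is a direct consequence of the $\spl(2)$-relations already in hand.
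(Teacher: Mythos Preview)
Your argument is correct: the ladder identity $D_sX_s^{\,p}m=c_{p,\ell}X_s^{\,p-1}m$ with $c_{p,\ell}=-ip\bigl(n+\ell+\tfrac{p-1}{2}\bigr)\neq 0$ follows exactly as you say from $[D_s,X_s]=-i(\mathbb{E}+n)$, and the induction on the degree $k$ that you build on it is the standard Fischer-type argument; all steps (injectivity of $X_s$ on $V_{k-1}$, the splitting $V_k=\mathcal{M}_k^s\oplus X_sV_{k-1}$, and the $\sym(2n)$-equivariance) are clean.

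As for comparison with the paper: the paper does \emph{not} prove this theorem. It is stated with the preamble ``For the following result we refer to \cite{DBSS}'' and no argument is given. What you have supplied is precisely the $\spl(2)$-lowest-weight (metaplectic Howe duality) argument that underlies that reference, so your route is not ``different'' so much as it is an explicit version of what the paper outsources. The one point worth flagging is that you are implicitly working degree by degree in the polynomial grading, so no topological issues arise; this is consistent with the paper's standing convention of restricting to polynomial solutions.
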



\section{Symplectic Fueter theorem}

In order to derive a symplectic version of the Fueter theorem, we will first consider a special class of (generalised) symmetries for the symplectic Dirac operator $D_s$, which then generalises the special conformal transformations introduced in {S}ection 2. However, as will become clear soon, it is again useful to consider a slight deformation of the true generalised symmetry. Note that these symmetries first appeared in \cite{DBHS}, whereas in \cite{HKS} the authors studied a special case in connection with projective geometry in two dimensions. 
\begin{defn}
Let $m=2n$ and $\alpha \in \R$ a perturbation parameter. Then the symplectic $\alpha$-deformed raising operator is for all $1 \leq j \leq n$ defined as 
\begin{align} \label{raisingsymplectic}
Z_j^{[m,\alpha]} &= X_s^2\partial_{x_j}-iy_j(\mathbb{E}+n-\alpha)(2\mathbb{E}+2n-1-2\alpha)\nonumber\\
& -iX_sq_j(2\mathbb{E}+2n-1-2\alpha)\ .
\end{align}
For the special case $\alpha = 0$, one has that $Z_j^{[m,0]} \in \mathsf{Hom}(\mathcal{M}_{\ell}^s,\mathcal{M}_{\ell+1}^s)$ {(see \cite{DBHS})}. This means that it can be seen as a raising operator in $\mathsf{End}(\ker D_s)$.  
\end{defn}
\begin{rem}
{ Recall that in the orthogonal case we started with the $\alpha$-deformed Klein inversion $\mathcal{I}_{\alpha}^{(m)}$ and constructed a raising operator $R^{[m,\alpha]}$. A first guess in order to generalise this approach into the symplectic framework is to find the symplectic analogue of the Klein inversion, which does not exist up to our knowledge he reason for this is simple: in the orthogonal case, the norm squared (which lies at the basis of the inversion) is a polynomial invariant of degree 2. In the symplectic case, no such invariant exists (the space of polynomials of fixed degree defines an irreducible representation for the regular action).}
\end{rem}
\begin{rem}
Note that there are $2n$ of these $\alpha$-deformed raising operators in total, which may seem strange as they are labelled by an index $1 \leq j \leq n$. There is however a second family of operators, defined by means of 
\begin{align}
Z_{j + n}^{[m,\alpha]} &= X_s^2\partial_{y_j}+ix_j(\mathbb{E}+n-\alpha)(2\mathbb{E}+2n-1-2\alpha)\nonumber\\
& -iX_s\partial_{q_j}(2\mathbb{E}+2n-1-2\alpha)\ .
\end{align}
It is sufficient to work with one member of this complete family, as one can easily verify (using direct calculations) that 
\[ [Y_{jj},Z_j^{[m,\alpha]}] = \big[-x_j\partial_{y_j} - \frac{i}{2}\partial_{q_j}^2,Z_j^{[m,\alpha]}\big] = Z_{j + n}^{[m,\alpha]}\ . \]
This was to be expected of course, as our (deformed) raising operators span a copy of the fundamental vector representation for $\sym(2n)$, and the relation above is essentially the action of the operator $Y_{jj}$, see (\ref{realisaties}). For this reason, we can (without loss of generality), focus on the operator from the definition above. 
\end{rem}
\noindent
Just as in the orthogonal case, the deformed raising operators have a special property when $\alpha = k \in \Z^+$ (compared to the orthogonal case, we have opted to absorb the minus sign into the definition). This is the symplectic version of item $(ii)$ in the wish list from Section 2. Before we prove this, we first mention the following technical lemma (each of these statements can easily proved by induction on the parameter $a$): 
\begin{lem}\label{comm}
For each $a \in \Z^+_0$, we have:
\begin{align*}
[q_j,X_s^a] &= -ay_jX_s^{a-1}\\
[\partial_{x_j},X_s^a] &= aX_s^{a-1}iq_j-\frac{a(a-1)}{2}X_s^{a-2}iy_j \\
[\partial_{y_j},X_s^a] &= aX_s^{a-1}\partial_{q_j}+\frac{a(a-1)}{2}X_s^{a-2}ix_j \ .
\end{align*}
\end{lem}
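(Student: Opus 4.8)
The plan is to establish all three formulae by induction on $a$, the engine being the derivation (Leibniz) property of the commutator: for any operator $A$ one has $[A,X_s^{a}] = [A,X_s]\,X_s^{a-1} + X_s\,[A,X_s^{a-1}]$. First I would dispose of the base cases. For $a = 0$ both sides of each identity vanish, and for $a = 1$ the claims reduce to the elementary commutators, which follow directly from the realisation $e_j\cdot\phi = iq_j\phi$, $e_{n+j}\cdot\phi = \partial_{q_j}\phi$ and the canonical relations $[\partial_{q_k},q_j] = \delta_{jk}$, $[\partial_{x_k},x_j] = \delta_{jk}$, $[\partial_{y_k},y_j] = \delta_{jk}$ (all remaining pairs among the $x,y,q$ and their derivatives commuting). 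Since $X_s = \sum_k\big(y_k\partial_{q_k} + ix_kq_k\big)$, a one-line computation gives
\[
[q_j,X_s] = -y_j,\qquad [\partial_{x_j},X_s] = iq_j,\qquad [\partial_{y_j},X_s] = \partial_{q_j}\ ,
\]
and I would also record the auxiliary facts $[x_j,X_s] = [y_j,X_s] = 0$ and $[\partial_{q_j},X_s] = ix_j$, which are needed to close the induction.

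For the first identity, substituting $[q_j,X_s] = -y_j$ and the induction hypothesis into the Leibniz rule gives $[q_j,X_s^{a}] = -y_jX_s^{a-1} + X_s\big(-(a-1)y_jX_s^{a-2}\big)$; since $y_j$ commutes with $X_s$ the second summand is $-(a-1)y_jX_s^{a-1}$, and the two add to $-ay_jX_s^{a-1}$. The identical argument with $[\partial_{q_j},X_s] = ix_j$ (and $x_j$ commuting with $X_s$) yields the auxiliary relation $[\partial_{q_j},X_s^{a}] = aX_s^{a-1}ix_j$, where no second-order term appears.

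For the second and third identities the Leibniz step produces, besides the expected pieces, a term $iq_jX_s^{a-1}$ (resp.\ $\partial_{q_j}X_s^{a-1}$) in which $q_j$ (resp.\ $\partial_{q_j}$) sits on the wrong side of the power of $X_s$; commuting it through by means of the first identity (resp.\ its auxiliary analogue) generates an extra contribution $-(a-1)X_s^{a-2}iy_j$ (resp.\ $(a-1)X_s^{a-2}ix_j$). Adding this to the term $-\tfrac{(a-1)(a-2)}{2}X_s^{a-2}iy_j$ (resp.\ $+\tfrac{(a-1)(a-2)}{2}X_s^{a-2}ix_j$) supplied by the induction hypothesis and using the elementary identity
\[
(a-1) + \frac{(a-1)(a-2)}{2} = \frac{a(a-1)}{2}
\]
reproduces precisely the stated second-order coefficient, while the first-order terms collapse to $aX_s^{a-1}iq_j$ (resp.\ $aX_s^{a-1}\partial_{q_j}$). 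One may equally well phrase this as a single simultaneous induction on all four brackets at once.

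The only point that demands attention — rather than any genuine difficulty — is the non-commutative bookkeeping: because $X_s$ is a first-order operator, moving $q_j$ or $\partial_{q_j}$ past $X_s^{a-1}$ is exactly what feeds the first identity back into the computation and hence manufactures the binomial coefficient $\binom{a}{2}$ out of the telescoping sum $(a-1) + \binom{a-1}{2} = \binom{a}{2}$. Beyond the canonical commutation relations and this little arithmetic identity, nothing else enters the proof.
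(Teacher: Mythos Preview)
Your proof is correct and follows exactly the approach indicated in the paper, which simply states that ``each of these statements can easily [be] proved by induction on the parameter $a$'' without giving details. Your fleshing-out of the induction --- recording the elementary brackets $[q_j,X_s]=-y_j$, $[\partial_{x_j},X_s]=iq_j$, $[\partial_{y_j},X_s]=\partial_{q_j}$, the auxiliary $[\partial_{q_j},X_s]=ix_j$, and then feeding the first identity back in to generate the $\binom{a}{2}$ coefficient --- is precisely what the authors had in mind.
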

\noindent
We then claim the following: 
\begin{thm}\label{sfueter}
Let $m=2n$ and let $k\in \mathbb{Z}^+$ be a deformation parameter. One then has that 
\begin{align} 
& Z_j^{[m,k]} \in \mathsf{End}\big(\ker(D_s^{2k+1})\big)\ .
\end{align}
\end{thm}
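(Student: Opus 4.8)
The plan is to mimic the orthogonal strategy via the $\spl(2)$-structure rather than attempting a Kelvin-type inversion (which, as the remark above notes, has no symplectic analogue). Recall from Section 3 that $(X_s, D_s, \mathbb{E}+n)$ generate a copy of $\spl(2)$ with $[D_s,X_s] = -i(\mathbb{E}+n)$. The key observation is that the operator $D_s^{2k+1}$ transforms under this $\spl(2)$ in a controlled way: acting on $\ell$-homogeneous symplectic spinors, $D_s$ lowers degree by one, so $D_s^{2k+1}$ maps $\mathsf{Pol}_\ell \otimes \mcS$ to $\mathsf{Pol}_{\ell-2k-1}\otimes \mcS$. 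The first step is therefore to compute the commutator $[Z_j^{[m,k]}, D_s^{2k+1}]$ and show it lands in the two-sided ideal generated by $D_s$ on the relevant spaces — more precisely, that it can be written as $D'\, D_s^{2k+1} + (\text{terms killed by being applied to } \ker D_s^{2k+1})$. In fact, since $Z_j^{[m,k]}$ raises degree by one, the cleanest target is to prove directly that $D_s^{2k+1} Z_j^{[m,k]} f = 0$ whenever $D_s^{2k+1} f = 0$.

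Concretely, I would proceed as follows. First, by the symplectic Fischer decomposition (Theorem~\ref{sym_decomp}), any $f$ with $D_s^{2k+1} f = 0$ that is $\ell$-homogeneous decomposes as $f = \sum_{a=0}^{2k} X_s^a M_{\ell-a}$ with $M_{\ell-a} \in \mathcal{M}_{\ell-a}^s$, exactly as in the proof of Lemma~1.6 in the orthogonal case. So it suffices to prove the claim on each summand $X_s^a M$ with $M \in \ker D_s$. Second, I would establish a commutation identity of the shape
\begin{align}
Z_j^{[m,k]} \, X_s^a M = \sum_{b} X_s^{b}\, Z_j^{[m,\beta(b)]} M \;+\; (\text{lower } X_s\text{-order corrections}),
\end{align}
using Lemma~\ref{comm} to push the raising operator past powers of $X_s$; the point is that conjugating $Z_j^{[m,k]}$ by $X_s^a$ shifts the deformation parameter, and for the right value the leading term becomes $X_s^{a+?}\, Z_j^{[m,0]} M$, which by the $\alpha=0$ case (stated in the Definition, from \cite{DBHS}) is again a genuine monogenic. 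Third, one checks the lower-order corrections are again of the form $X_s^c N_c$ with $c \le 2k$ and $N_c$ monogenic — so that applying $D_s^{2k+1}$ (which can reach at most down through a length-$(2k+1)$ Fischer tail) annihilates everything. This is the direct analogue of the computation $\mathcal{I}_{-2k}^{(m)}[\ux^a M] = \ux^a \mathcal{I}_0^{(m)}[M]$ from Lemma~1.6.

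The main obstacle I anticipate is the bookkeeping in the second and third steps: unlike the orthogonal case, $X_s$ is a second-order differential operator in the spinor variable $q$, so the commutators $[\partial_{x_j}, X_s^a]$ and $[\partial_{y_j}, X_s^a]$ produce two terms each (the $X_s^{a-1}$ and the $X_s^{a-2}$ pieces in Lemma~\ref{comm}), and $Z_j^{[m,k]}$ is a sum of three such blocks with Euler-operator-valued coefficients. Tracking how the quadratic polynomials $(\mathbb{E}+n-k)(2\mathbb{E}+2n-1-2k)$ in the coefficients interact with the degree shifts induced by $X_s^a$ (each $X_s$ raises $\mathbb{E}$ by one) is where the special value $\alpha=k$ must drop out: the parameters have to align so that each term regroups into the form $X_s^{(\cdot)} Z_j^{[m,0]}(\cdot)$ plus a controlled tail. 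I would organize this by first proving the identity $Z_j^{[m,\alpha]} X_s = X_s Z_j^{[m,\alpha-1]} + (\text{explicit first-order correction})$ for a single power, then iterating. Once that single-step intertwining relation is pinned down, the general statement follows by the induction-plus-Fischer-decomposition argument sketched above, exactly paralleling the orthogonal template (i), (ii), (iii) laid out at the end of Section 2.
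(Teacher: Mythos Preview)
Your strategy is the paper's strategy: reduce to the Fischer pieces $X_s^a M$ with $M \in \ker D_s$ and $0 \le a \le 2k$, then push $Z_j^{[m,k]}$ through $X_s^a$ via Lemma~\ref{comm}. The paper does this in one pass (not your proposed single-step iteration), arriving at the identity $Z_j^{[m,k]} X_s^a = X_s^a\big(Z_j^{[m,0]} + \Psi_j\big)$ with an explicit correction $\Psi_j = -iy_j\big(2(a-2k)\mE + \gamma(n,k,a)\big) - iX_s q_j(a-2k)$.

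The gap in your proposal is the claim that ``the lower-order corrections are again of the form $X_s^c N_c$ with $c \le 2k$''. This is \emph{not} a bookkeeping matter, and the analogy with the orthogonal Lemma on $\mathcal{I}_{-2k}^{(m)}$ breaks down exactly here. In the symplectic setting the building blocks $y_j$ and $X_s q_j$ of $\Psi_j$ are not mere multipliers: each maps $\ker D_s$ into $\ker D_s \oplus X_s\ker D_s \oplus X_s^2\ker D_s$, so $X_s^a\Psi_j$ can a priori land in $X_s^{a+2}\ker D_s$. For $a = 2k$ and $a = 2k-1$ this would overflow $\ker(D_s^{2k+1})$, and no amount of reshuffling into $Z_j^{[m,0]}$-pieces fixes that. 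The paper handles these two boundary cases separately: at $a = 2k$ one checks that both coefficients in $\Psi_j$ vanish (the factor $(a-2k)$ and the constant $\gamma(n,k,2k) = 0$), so $\Psi_j = 0$ outright; at $a = 2k-1$ one must verify by two explicit commutator computations that $D_s^2\Psi_j = 0$ on $\ker D_s$, i.e.\ that the $X_s^2$-component of $\Psi_j$ actually drops out. This cancellation is precisely where the integrality of the deformation parameter $\alpha = k$ is used, and it is the heart of the proof rather than a side calculation.
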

\begin{proof}
In order to prove this, we will first work towards a relation of the form 		 
\[ Z_j^{[m,k]}X_s^a\ker D_s=X_s^a\left(Z_j^{[m,0]}+\Psi_j\right)\ker D_s \qquad (a \in \Z^+)\ , \]
with $\Psi_j$ an operator acting on polynomial symplectic spinors whose explicit definition can be found below. Using the previous lemma, we first of all have that 
\begin{align}\label{term1}
X_s^2\partial _{x_j}X_s^a &= X_s^2\left(X_s^a\partial_{x_j}+[\partial_{x_j},X_s^a]\right)\nonumber \\
&= X_s^{a}\left(X_s^{2}\partial_{x_j}+aX_siq_j-\frac{a(a-1)}{2}iy_j\right)\ .
\end{align}
For the second term in $Z_j^{[m,k]}$, we quite easily get that 
\begin{align}\label{term2}
& -iy_j(\mE + n - k)(2\mE + 2n - 1 - 2k)X_s^a \nonumber\\
&= -iX_s^ay_j(\mE + n + a - k)(2\mE + 2n - 1 + 2(a- k))\ ,
\end{align}
since $[y_j,X_s] = 0$ and $[\mE,X_s] = X_s$. For the third term in $Z_j^{[m,k]}$ we have
\begin{align*}
-iX_sq_j(2\mathbb{E}+2n-2k-1)X_s^a &= -iX_sq_jX_s^a(2\mathbb{E}+2n -1 + 2(a-k))\ .
\end{align*}
Again making use of the previous lemma to swap $q_j$ and $X_s^a$, we arrive at
\begin{align}\label{term3}
& -iX_sq_j(2\mathbb{E}+2n-2k-1)X_s^a \nonumber \\
&= -iX_s\big(X_s^aq_j - ay_jX_s^{a-1}\big)(2\mathbb{E}+2n-1+2(a-k))\ .
\end{align}
Adding expressions (\ref{term1}), (\ref{term2}) and (\ref{term3}), a smart rearrangement shows that 
\begin{align*}
Z_j^{[m,k]}X_s^a &= X_s^a\big(Z_j^{[m,0]} + \Psi_j\big)\ ,
\end{align*}
where the rest operator $\Psi_j$ is given by the expression 
\[ \Psi_j := -iy_j\left(2(a-2k)\mathbb{E}+\gamma(n,k,a)\right)-iX_sq_j(a-2k)\ , \]
and where the numerical constant $\gamma(n,k,a)$ given by 
\begin{align*}
\gamma(m,k,a) &= k-2ak+2k^2+2an-4kn+\frac{a(a-1)}{2}\ .
\end{align*}
We know that $Z_j^{[m,0]}$ preserves the kernel of $D_s$, but for the operator $\Psi_j$ things are slightly more complicated. Indeed, one has that 
\begin{align}\label{y_Xq_mapping}
y_j &:\ker D_s \to \ker D_s \oplus X_s\ker D_s\oplus X_s^2\ker D_s\nonumber\\ 
X_sq_j &:\ker D_s\to X_s\ker D_s\oplus X_s^2\ker D_s\ ,
\end{align}
which means that $\Psi_j$ does not preserve the kernel of the operator $D_s^{2k+1}$. As long as $a \leq 2k-2$, we can conclude from the analysis above that  
\[ Z_j^{[m,k]}X_s^a \ker D_s = X_s^a\big(Z_j^{[m,0]} + \Psi_j\big)\ker D_s \subset \bigoplus_{p = 0}^2 X_s^{a + p}\ker D_s\ ,  \]
which means that these summands still sit inside $\ker(D_s^{2k+1})$. However, for the remaining indices $a=2k$ and $a=2k-1$ it is not clear at this point why one has that 
\[ Z_j^{[m,k]}\big(X_s^{2k-1} \ker D_s \oplus X_s^{2k} \ker D_s\big) \subset \bigoplus_{p = 0}^{2k}X_s^p\ker D_s\ , \]
as the operator identities in (\ref{y_Xq_mapping}) seem to indicate that one may expect a contribution of the form $X_s^p \ker D_s$ with $p > 2k$. In order to show that this is not the case, we first of all note that $\Psi_j = 0$ for $a = 2k$. To verify that also the final case $a = 2k-1$ poses no problems, we will in fact prove that the following property holds: 
\begin{align*}
\Psi_j : \ker D_s \to \ker D_s \oplus X_s\ker D_s.
\end{align*}
To do so, it suffices to prove that $D_s^2(\Psi_j \ker D_s)=0$. Plugging in $a=2k-1$ into our definition for $\Psi_j$, one finds $\Psi_j = iy_j(2\mathbb{E}+2n-1) + iX_sq_j$. As the action of $D_s\Psi_j$ on $\ker D_s$ reduces to the commutator
\begin{align*}
[D_s,y_j(2\mathbb{E}+2n-1)+X_sq_j] &= iq_j(\mathbb{E}+n-1)-X_s\partial_{x_j}\ ,
\end{align*}
we are now left to check whether the right-hand side commutes with $D_s$ (when acting on $\ker D_s$):
	\begin{align*}
		[D_s,iq_j(\mathbb{E}+m-1)-X_s\partial_{x_j}]&=[D_s,iq_j](\mathbb{E}+n-1)-[D_s,X_s]\partial_{x_j}\\
		&= -\partial_{x_j}(\mathbb{E}+n-1)+i(\mathbb{E}+n)\partial_{x_j}\ ,
	\end{align*}
which is indeed zero. Together, this finishes the proof. 
\end{proof}
\noindent
In order to get a feel for what this theorem tells us, let us have a look at the scheme below: 
\begin{center}
	\begin{tikzpicture}[x=0.75pt,y=0.75pt,yscale=-1,xscale=1]
	
	\draw    (40,28) -- (338.3,27.96) ;

	\draw    (40,28) -- (211.54,184.69) ;

	\draw [color={rgb, 255:red, 0; green, 0; blue, 0 }  ,draw opacity=1 ] [dash pattern={on 4.5pt off 4.5pt}]  (124,28) -- (295.54,184.69) ;

	\draw  [fill={rgb, 255:red, 0; green, 0; blue, 0 }  ,fill opacity=1 ] (135,116.75) .. controls (135,115.78) and (135.84,115) .. (136.88,115) .. controls (137.91,115) and (138.75,115.78) .. (138.75,116.75) .. controls (138.75,117.72) and (137.91,118.5) .. (136.88,118.5) .. controls (135.84,118.5) and (135,117.72) .. (135,116.75) -- cycle ;
	\draw [color={rgb, 255:red, 65; green, 117; blue, 5 }  ,draw opacity=1 ]   (135.16,120.23) .. controls (79,175.83) and (60.18,80.14) .. (138.75,116.75) ;
	
	\draw [shift={(136.88,118.5)}, rotate = 134.12] [fill={rgb, 255:red, 65; green, 117; blue, 5 }  ,fill opacity=1 ][line width=0.75]  [draw opacity=0] (10.72,-5.15) -- (0,0) -- (10.72,5.15) -- (7.12,0) -- cycle    ;
	\draw [color={rgb, 255:red, 208; green, 2; blue, 27 }  ,draw opacity=1 ]   (137.66,113.09) .. controls (148.64,88.57) and (190.23,74.42) .. (218.75,116.75) ;
	
	\draw [shift={(136.88,115)}, rotate = 290.56] [fill={rgb, 255:red, 208; green, 2; blue, 27 }  ,fill opacity=1 ][line width=0.75]  [draw opacity=0] (10.72,-5.15) -- (0,0) -- (10.72,5.15) -- (7.12,0) -- cycle    ;
	\draw  [fill={rgb, 255:red, 0; green, 0; blue, 0 }  ,fill opacity=1 ] (215,116.75) .. controls (215,115.78) and (215.84,115) .. (216.88,115) .. controls (217.91,115) and (218.75,115.78) .. (218.75,116.75) .. controls (218.75,117.72) and (217.91,118.5) .. (216.88,118.5) .. controls (215.84,118.5) and (215,117.72) .. (215,116.75) -- cycle ;
	\draw [color={rgb, 255:red, 208; green, 2; blue, 27 }  ,draw opacity=1 ]   (217.66,113.09) .. controls (228.54,88.55) and (268.23,73.92) .. (296.75,116.25) ;
	
	\draw [shift={(216.88,115)}, rotate = 290.56] [fill={rgb, 255:red, 208; green, 2; blue, 27 }  ,fill opacity=1 ][line width=0.75]  [draw opacity=0] (10.72,-5.15) -- (0,0) -- (10.72,5.15) -- (7.12,0) -- cycle    ;
	\draw  [fill={rgb, 255:red, 0; green, 0; blue, 0 }  ,fill opacity=1 ] (293,116.25) .. controls (293,115.28) and (293.84,114.5) .. (294.88,114.5) .. controls (295.91,114.5) and (296.75,115.28) .. (296.75,116.25) .. controls (296.75,117.22) and (295.91,118) .. (294.88,118) .. controls (293.84,118) and (293,117.22) .. (293,116.25) -- cycle ;
	\draw [color={rgb, 255:red, 74; green, 144; blue, 226 }  ,draw opacity=1 ]   (138.75,116.75) .. controls (164.4,135.12) and (188.93,135.49) .. (215.26,117.85) ;
	\draw [shift={(216.88,116.75)}, rotate = 505.17] [fill={rgb, 255:red, 74; green, 144; blue, 226 }  ,fill opacity=1 ][line width=0.75]  [draw opacity=0] (10.72,-5.15) -- (0,0) -- (10.72,5.15) -- (7.12,0) -- cycle    ;
	
	\draw [color={rgb, 255:red, 74; green, 144; blue, 226 }  ,draw opacity=1 ]   (218.75,116.75) .. controls (244.4,135.12) and (268.93,135.49) .. (295.26,117.85) ;
	\draw [shift={(296.88,116.75)}, rotate = 505.17] [fill={rgb, 255:red, 74; green, 144; blue, 226 }  ,fill opacity=1 ][line width=0.75]  [draw opacity=0] (10.72,-5.15) -- (0,0) -- (10.72,5.15) -- (7.12,0) -- cycle    ;
	
	\draw [color={rgb, 255:red, 0; green, 0; blue, 0 }  ,draw opacity=1 ] [dash pattern={on 4.5pt off 4.5pt}]  (202,28) -- (373.54,184.69) ;

	\draw (41,16) node   {$\ker(D_s)$};
	\draw (122,16) node   {$\ker\left(D_s^2\right)$};
	\draw (202,16) node   {$\ker(D_s^3)$};
	\draw (111,120) node   {$\mathcal{M}_{\ell}^s$};
	\draw (342,112) node   {$\cdots $};
	\draw (181,117) node   {$X_s$};
	\draw (259,98) node  {$D_s$};
	\draw (291,16) node   {$\cdots $};
	\draw (181,98) node {$D_s$};
	\draw (263,118) node  {$X_s$};
	\draw (79,132) node [color={rgb, 255:red, 65; green, 117; blue, 5 }  ,opacity=1 ]  {$\mathbb{E}$};
	\end{tikzpicture}
\end{center}
This infinite triangle should be seen as a visual representation for the space $\mathsf{Pol}(\R^{2n}) \otimes \mcS(\R^{n})$ of polynomial symplectic spinors, where for instance the left edge of the triangle contains all the spaces $\mathcal{M}_\ell^s$ (starting at the vertex for $\ell = 0$), and where each horizontal line represents the ${\spl(2)}$-module generated by the operators $X_s$ and $D_s$. The $k$'th dashed line, parallel with the left edge (which counts as the zero'th line), can then be seen as the quotient space $\ker(D_s^{p+1})/\ker(D_s^p)$. So what we have just proved is that for $k \in \Z^+$, the infinite strip bounded by the $k$'th and zero'th dashed line is preserved by the action of the $k$-deformed raising operator $Z_j^{[m,k]}$. \\
\\
In order to obtain a symplectic Fueter theorem, we now need to compare the $k$-deformed raising operator with the undeformed raising operator in the lowest dimension, which is $m = 2n = 2$ (this is item $(i)$ from the wish list from {S}ection 2). Using Definition \ref{raisingsymplectic}, it is clear that this operator is given by 
\[ Z_1^{[2,0]} = X_s^2\partial_{x_1}-iy_1(\mathbb{E}+1)(2\mathbb{E}+1)-iX_sq_1(2\mathbb{E}+1)\ . \]
Note that we have put the subscript $j = 1$, as there is no choice here. If we now compare this operator with the $k$-deformed raising operator in $m = 2n$ dimensions, it is clear that they are formally equal provided
\begin{align}\label{keuze}
2n-2k-1=1\Longleftrightarrow k=n-1.
\end{align}
\begin{rem}
Recall that this is a {\em formal} equality only, in the sense that the exact meaning of the symbol $X_s$ differs (it is a summation involving each of the $2n$ variables). This is not different from the classical (orthogonal) case, where one had to work with the symbol $\overline{e}_1\ux$ as a placeholder for both $m = 2$ (which then reduces to the complex variable $z$) and the general case.  
\end{rem}
\noindent
As mentioned in item $(iii)$ from the wish list at the end of Section 2, the true power of the Fueter theorem is that one can perform this formal substitution at the end, provided one supplements it with an appropriate action of the Dirac operator. In the symplectic case, a similar trick will be used to construct symplectic monogenics. We formulate this as the first (weak) version of our new Fueter theorem: 
\begin{thm}[Weak Fueter]
If $M_\ell(X_s;q_1)\psi(q_1) \in \mathsf{Pol}(\R^2) \otimes \mcS(\R)$ is defined for all $\ell \in \Z^+$ by means of 
\[ M_\ell(X_s;q_1)\psi(q_1) := \big(Z_1^{[2,0]}\big)^\ell e^{-\frac{1}{2}q_1^2} \ , \]
a formal substitution of the variable $X_s$ and $q_1 {\mapsto} \boldsymbol{q}{=}(q_1,\ldots,q_n)$ leads to 
\[ D_s^{2n - 2}\bigg(M_\ell(X_s;q_1)\psi(\boldsymbol{q})\bigg) \in \big(\mathsf{Pol}(\R^{2n}) \otimes \mcS(\R^n)\big) \cap \ker D_s\ . \]
\end{thm}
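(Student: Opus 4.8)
The plan is to execute the three-step blueprint from the end of Section 2, now with Theorem \ref{sfueter} playing the role of item $(ii)$. First I would check that the seed is a degree-zero symplectic monogenic in every dimension: the Gaussian $\psi(\boldsymbol q) = e^{-\frac{1}{2}(q_1^2 + \cdots + q_n^2)}$ is independent of $(\ux,\uy)$, and each term of $D_s = \sum_{j=1}^n iq_j\partial_{y_j} - \partial_{x_j}\partial_{q_j}$ begins with a derivative in an $x$- or $y$-variable, so $D_s\psi(\boldsymbol q) = 0$. Hence $\psi(\boldsymbol q) \in \mathcal{M}_0^s(\R^{2n}) \subseteq \ker D_s^{2n-1}$. (For $n=1$ this is the seed $\psi(q_1)$ of the statement, nothing is substituted, and the theorem is the trivial base case; moreover $M_\ell(X_s;q_1)\psi(q_1) = \big(Z_1^{[2,0]}\big)^\ell\psi(q_1)$ then lies in $\mathcal{M}_\ell^s(\R^2)$ by the $\alpha=0$ case of the definition of $Z_1^{[m,\alpha]}$.)

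Next I would make the formal substitution precise. By the identity \eqref{keuze}, the choice $k = n-1$ turns the scalar factors $\mathbb{E}+n-k$ and $2\mathbb{E}+2n-1-2k$ occurring in $Z_1^{[m,k]}$ into $\mathbb{E}+1$ and $2\mathbb{E}+1$, so that $Z_1^{[m,n-1]}$ (with $m=2n$) is \emph{literally the same expression} as $Z_1^{[2,0]}$, only with the symbols $X_s$ and $\mathbb{E}$ now read over $2n$ variables. Thus the substitution in the statement amounts to replacing the operator $Z_1^{[2,0]}$ by $Z_1^{[m,n-1]}$ and the seed $e^{-\frac{1}{2}q_1^2}$ by $\psi(\boldsymbol q)$, so that
\[
M_\ell(X_s;\boldsymbol q)\,\psi(\boldsymbol q) \;=\; \big(Z_1^{[m,n-1]}\big)^\ell \psi(\boldsymbol q)\ .
\]
Since $Z_1^{[m,\alpha]}$ is homogeneous of degree $+1$ in $(\ux,\uy)$ and maps $\mathsf{Pol}(\R^{2n})\otimes\mcS(\R^n)$ into itself — the $(\ux,\uy)$-dependence is touched only by polynomial multiplications and derivatives, while on the spinor factor only $q_j$ and $\partial_{q_j}$ appear, and these preserve $\mcS(\R^n)$ — the right-hand side lies in $\mathsf{Pol}_\ell(\R^{2n})\otimes\mcS(\R^n)$.

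Finally I would invoke Theorem \ref{sfueter} with $k = n-1$: it gives $Z_1^{[m,n-1]} \in \mathsf{End}\big(\ker D_s^{2(n-1)+1}\big) = \mathsf{End}\big(\ker D_s^{2n-1}\big)$. Iterating this endomorphism and using $\psi(\boldsymbol q)\in\ker D_s\subseteq\ker D_s^{2n-1}$ from the first step, one obtains $\big(Z_1^{[m,n-1]}\big)^\ell\psi(\boldsymbol q)\in\ker D_s^{2n-1}$. Since $D_s^{2n-1} = D_s\circ D_s^{2n-2}$, this says precisely that $D_s^{2n-2}\big(M_\ell(X_s;\boldsymbol q)\psi(\boldsymbol q)\big)\in\ker D_s$; together with the previous paragraph (and the fact that $D_s$ again preserves $\mathsf{Pol}(\R^{2n})\otimes\mcS(\R^n)$), this yields the claim.

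Given Theorem \ref{sfueter}, no genuine obstacle remains — the hard part was carried out there. The one point requiring care (precision rather than difficulty) is the meaning of the ``formal substitution'': it acts on the operator $Z_1^{[2,0]}$ and on the seed separately, the substituted operator is exactly $Z_1^{[m,n-1]}$ (this is \eqref{keuze}, and is dimension-independent because the commutators $[\mathbb{E},X_s]=X_s$, $[q_1,X_s]=-y_1$, $[\partial_{x_1},X_s]=iq_1$, etc.\ all have the same shape in every dimension), and the lifted seed $\psi(\boldsymbol q)$ is still annihilated by $D_s$.
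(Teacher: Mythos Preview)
Your proof is correct and follows essentially the same route as the paper: identify $Z_1^{[2,0]}$ formally with $Z_1^{[m,n-1]}$ via \eqref{keuze}, note that the Gaussian seed lies in $\ker D_s$, and then apply Theorem~\ref{sfueter} with $k=n-1$ to get $(Z_1^{[m,n-1]})^\ell\psi(\boldsymbol q)\in\ker D_s^{2n-1}$, whence the conclusion. The only point of divergence is one of emphasis: the paper is deliberately cautious about the meaning of the $X_s$-substitution, stating that ``showing that also the formal substitution $y_1\partial_{q_1}+ix_1q_1\mapsto\sum_j(y_j\partial_{q_j}+ix_jq_j)$ makes sense requires more work'' and deferring this to the explicit Bessel-polynomial computation that follows, whereas you dispatch it directly by observing that all the relevant commutators $[\mathbb E,X_s]$, $[q_1,X_s]$, $[\partial_{x_1},X_s]$ are dimension-independent (so the operator substitution and the result substitution agree). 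Your shortcut is legitimate and in fact slightly tightens the argument at this stage, while the paper's choice has the advantage of simultaneously producing the closed form needed for the strong version.
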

\begin{proof}
The proof for this theorem basically consists of two parts. On a purely formal level, the conclusion follows from theorem \ref{sfueter} and the observation that (formally) one has that the undeformed raising operator in dimension $m = 2$ coincides with the $k$-deformed raising operator in $m = 2n$ for $k = n-1$. Indeed, an $\ell$-fold action of the latter belongs to $\ker(D_s^{2n-1})$, so
\[ D_s^{2n-2}\bigg(\big(Z_1^{[m,n-1]}\big)^\ell\psi\bigg) \in \ker D_s\ , \]
with $\psi \in \mcS(\R^n)$ a symplectic spinor. The second part of the proof is there to make sure that this formal identification is indeed possible. On the level of spinors, this is not difficult: for $m = 2$, we let our (undeformed) raising operator act on the special element $\psi(q_1) := \exp(-\frac{1}{2}q_1^2)$. Switching from $m = 2$ to $m = 2n$, which means that one goes from $q_1 \in \R$ to $\boldsymbol{q} \in \R^n$, then boils down to the multiplication with a suitable element of $\mcS(\R^{n-1})$, as 
\[ \psi(q_1) {\mapsto} \psi(\boldsymbol{q}) = \psi(q_1)\prod_{j = 2}^n\psi(q_j)\ . \]
As the operator $\partial_{q_1}$ hidden in the raising operator $Z_1^{[2,0]}$ only acts on $\psi(q_1)$, the transition from $q_1$ to $\boldsymbol{q}$ is perfectly defined. Showing that also the formal substitution 
\[ y_1\partial_{q_1} + ix_1q_1 \mapsto \sum_{j=1}^n \big(y_j\partial_{q_j}+ix_jq_j\big) \]
makes sense requires more work, and will be done in a series of calculations below. 
\end{proof}
\begin{rem}
It might be useful to explain in more detail what exactly is missing at this point. In the classical orthogonal case, the action of the raising operator (in dimension $m = 2$) simply generates powers $(\overline{e}_1 \ux)^\ell$, which are then recognised as holomorphic powers. This makes it almost trivial to see that the formal substitution for $\ux$ (from $m = 2$ to an arbitrary even dimension $m$) is well-defined. In the symplectic case, it is by no means clear at this point that the action of the raising operator (for $m = 2$) leads to a polynomial symplectic spinor in which $X_s$ appears as a formal variable, which can then again be replaced by its counterpart in $m = 2n$ dimensions. For that purpose, we first have to derive a more explicit expression. 
\end{rem}
\noindent
In the orthogonal case, the repeated action of $R^{[2,0]}$ on a spinor in $\mS \subset \C_m$ (or simply $1 \in \C_m$) gave rise to holomorphic powers $(x_1 + \overline{e}_1e_2 x_2)^\ell$. An obvious question is the following: what is the symplectic version of this? So in a sense, we are about to derive the symplectic version of the holomorphic powers. Note that in the classical case, one could do calculations in the Clifford algebra $\C_m$ and switch to spinors at the end by means of a multiplication with $I$ (cfr. the introduction on page 2). In the symplectic case, something similar will be true, but one has to take into account that the symplectic Clifford `variable' $X_s$ contains derivatives with respect to the variables $\boldsymbol{q} \in \R^n$ which will act on the chosen symplectic spinor $\psi \in \mcS(\R^n)$. The equivalent of the complex variable $z \in \C$ is thus given by the action of $Z^{[2,0]}$ on a spinor $\psi \in \mcS(\R)$: 
\begin{align*}
\left(X_s^2\partial _{x}-iy(\mathbb{E}+1)(2\mathbb{E}+1)-iX_sq(2\mathbb{E}+1)\right){\psi}=-i(y+X_sq){\psi} ,
\end{align*}
with $X_s = ixq + y\partial_q$. Note that we have switched to the notation $(x,y;q)$ instead of $(x_1,y_1;q_1)$ to enlighten the notation. Our goal is now to explicitly find a closed form for the $p$-fold action of
this raising operator. The following theorem shows that there exists a closed expression in terms of the `variables' $y$, $X_s$ and $q$ (recall that $X_s$ is an operator). 
\begin{thm} 
The repeated action of the (undeformed) raising operator on a symplectic spinor $\psi \in  \mathcal{S}(\mathbb{R})$ is given by 
\begin{align*}
\left(X_s^2\partial _{x}-iy(\mathbb{E}+1)(2\mathbb{E}+1)-iX_sq(2\mathbb{E}+1)\right)^p{\psi}=\left(\sum_{j=0}^{p}\gamma_j^{(p)}y^jX_s^{p-j}q^{p-j}\right){\psi},
\end{align*}
where the coefficients $\gamma_j^{(p)}$ are given by: 
\begin{align}\label{coefficientengamma}
{ 	\gamma_j^{(p)}=(-i)^p p!(2j-1)!!  {p + j \choose 2j}.}
\end{align}
\end{thm}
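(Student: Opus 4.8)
The plan is to proceed by induction on $p$, using the recursion that comes from applying the raising operator $Z := X_s^2\partial_x - iy(\mathbb{E}+1)(2\mathbb{E}+1) - iX_sq(2\mathbb{E}+1)$ once more to the putative closed form. First I would set up the induction hypothesis $\big(Z\big)^p\psi = \big(\sum_{j=0}^p \gamma_j^{(p)} y^j X_s^{p-j} q^{p-j}\big)\psi$ and observe two preliminary facts about how the three pieces of $Z$ act on a monomial of the shape $y^j X_s^{a} q^{a}\psi$ (where the powers of $X_s$ and $q$ match, which is the invariant form that the whole computation stays inside). One needs: the Euler operator $\mathbb{E} = x\partial_x + y\partial_y$ acts on $y^j X_s^a q^a\psi$ by reading off the polynomial degree in $(x,y)$; since $X_s = ixq + y\partial_q$ is homogeneous of degree one in $(x,y)$, the monomial $y^j X_s^a q^a$ has $(x,y)$-degree $j + a$, so $\mathbb{E}$ contributes the scalar $j+a$. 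The commutator $[\partial_x, X_s^a] = a X_s^{a-1}(iq)$ (which is the degenerate $n=1$ case of Lemma \ref{comm}, with $y_1$-terms vanishing after relabelling, or just a two-line induction), and $q$ commutes with $y$ and with $q$ but satisfies $[q, X_s^a] = -ay X_s^{a-1}$, again the $n=1$ case of Lemma \ref{comm}.

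Then I would compute $Z\big(y^j X_s^a q^a\psi\big)$ term by term. The middle term $-iy(\mathbb{E}+1)(2\mathbb{E}+1)$ simply multiplies by $-iy\cdot(j+a+1)(2j+2a+1)$, raising $j$ by one and leaving $X_s^a q^a$ alone. The third term $-iX_sq(2\mathbb{E}+1)$ gives $-i(2j+2a+1)X_s q\, y^j X_s^a q^a\psi$; here $q$ must be pushed past $X_s^a$ using $[q,X_s^a]=-ayX_s^{a-1}$, producing a "diagonal" piece $-i(2j+2a+1)y^j X_s^{a+1}q^{a+1}\psi$ plus an off-diagonal piece $+ia(2j+2a+1)y^{j+1}X_s^a q^a\psi$ that merges with the output of the middle term. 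The first term $X_s^2\partial_x\, y^j X_s^a q^a\psi$: here $\partial_x$ hits $X_s^a$ (via $[\partial_x,X_s^a]=aX_s^{a-1}iq$) and also $\partial_x\psi=0$, while $\partial_x y^j = 0$; this yields $ia\, y^j X_s^{a+1}q^{a+1}\psi$. Collecting, $Z\big(y^jX_s^aq^a\psi\big)$ is a combination of $y^j X_s^{a+1}q^{a+1}\psi$ (with coefficient $ia - i(2j+2a+1) = -i(j+a+1+ \text{something})$ — the exact integer is to be tracked) and $y^{j+1}X_s^a q^a\psi$ (with coefficient $-i(j+a+1)(2j+2a+1)+ia(2j+2a+1) = -i(2j+2a+1)(j+1)$, after simplification). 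Summing over $j$ against $\gamma_j^{(p)}$ and matching the coefficient of $y^j X_s^{p+1-j}q^{p+1-j}$ on both sides then produces a two-term recursion
\[
\gamma_j^{(p+1)} = A_{p,j}\,\gamma_j^{(p)} + B_{p,j}\,\gamma_{j-1}^{(p)},
\]
with $A_{p,j}, B_{p,j}$ the explicit linear-and-quadratic factors just described. The final step is to verify that the closed form $\gamma_j^{(p)} = (-i)^p p!\,(2j-1)!!\binom{p+j}{2j}$ satisfies this recursion together with the base case $p=0$ (where the only term is $j=0$, $\gamma_0^{(0)}=1$), which is a finite manipulation of factorials and binomial coefficients — the identity $\binom{p+1+j}{2j} = \binom{p+j}{2j} + \binom{p+j}{2j-1}$ and the double-factorial shift $(2j+1)!! = (2j+1)(2j-1)!!$ are the only ingredients.

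The main obstacle I anticipate is bookkeeping rather than conceptual: getting the two transition coefficients $A_{p,j}$ and $B_{p,j}$ exactly right, since the action of $-iX_sq(2\mathbb{E}+1)$ splits into a diagonal and an off-diagonal contribution and one must be careful that $2\mathbb{E}+1$ acts \emph{before} the commutator $[q,X_s^a]$ is resolved (the Euler operator sees the $(x,y)$-degree of the input monomial, which is $j+a$, not of the rearranged output). A secondary point that needs a word of care is the claim, implicit in the stated closed form, that the repeated action never leaves the span of monomials $y^j X_s^{p-j} q^{p-j}\psi$ with \emph{equal} powers of $X_s$ and $q$; this is exactly what the term-by-term computation above confirms, since every one of the three pieces of $Z$ either adds one $y$ (keeping $X_s$- and $q$-powers equal) or adds one $X_s$ \emph{and} one $q$ simultaneously — so the invariant is preserved and the induction is well-posed. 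Once the recursion is pinned down, verifying the binomial/double-factorial formula is routine.
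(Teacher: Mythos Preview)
Your approach---induction on $p$, act with each of the three pieces of $Z$ on a monomial $y^jX_s^aq^a\psi$ using the commutators of Lemma~\ref{comm}, extract a two-term recursion for $\gamma_j^{(p)}$, and then verify the closed form---is exactly the paper's. The structure is fine and the observation that the action stays inside the span of monomials with matching $X_s$- and $q$-powers is correct and worth stating.

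There is, however, one concrete error that would derail the recursion. You assert that in the $n=1$ case $[\partial_x,X_s^a]=aX_s^{a-1}(iq)$, with ``the $y_1$-terms vanishing after relabelling''. They do not vanish: already for $a=2$ one has $[\partial_x,X_s^2]=iqX_s+X_siq=2iX_sq-iy$, since $[q,X_s]=-y$. The correct identity is the full Lemma~\ref{comm} formula
\[
[\partial_x,X_s^a]=aX_s^{a-1}(iq)-\tfrac{a(a-1)}{2}X_s^{a-2}(iy),
\]
and the second term survives in dimension two. After left-multiplying by $X_s^2$ this contributes an extra $-\tfrac{i a(a-1)}{2}\,y^{j+1}X_s^aq^a\psi$ to the off-diagonal output. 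Consequently your coefficient $B_{p,j}$ is missing the piece $-\tfrac{i(p-j+1)(p-j)}{2}$; with $a=p-j$ the correct off-diagonal coefficient attached to $\gamma_{j-1}^{(p)}$ is
\[
-i\Bigl[(2p+1)j+\tfrac{(p-j+1)(p-j)}{2}\Bigr],
\]
not $-i(2p+1)j$. One can see the discrepancy already at $p=2$, $j=1$: the truncated recursion would force $\tfrac{1}{2}(p+j+1)(p+j)=(2p+1)j$, i.e.\ $6=5$. Once you restore the missing term, the factorial/binomial check goes through via $\binom{p+1+j}{2j}=\binom{p+j}{2j}+\binom{p+j}{2j-1}$ exactly as you outline; the rest of your bookkeeping (in particular the diagonal coefficient $-i(p+j+1)$ and the point that $2\mathbb{E}+1$ acts before commuting $q$ past $X_s^a$) is correct.
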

\begin{proof}
The proof follows from an easy induction argument over $p$. Suppose that the formula is valid for $p$, and consider the expression
\begin{align}
\left(X_s^2\partial	 _{x}-iy(\mathbb{E}+1)(2\mathbb{E}+1)-iX_sq(2\mathbb{E}+1)\right)\left[\sum_{j=0}^{p}\gamma_j^{(p)}y^jX_s^{p-j}q^{p-j}\right].\label{actie}
\end{align}
Invoking the fact that $\partial_x \psi = 0$ (which explains why the term for $j = p$ is no longer present), we first of all get that
\begin{align*}
X_s^2\partial_x\sum_{j=0}^{p-1}\gamma_j^{(p)}y^j X_s^{p-j}q^{p-j} &= 
	X_s^2\sum_{j=0}^{p-1}\gamma_j^{(p)}y^j[\partial_x, X_s^{p-j}]q^{p-j} \ .
\end{align*}
Using Lemma \ref{comm}, this can then be written as 
\begin{align*}
i\sum_{j=0}^{p-1}\gamma_j^{(p)}\left(	(p-j)X_s^{p-j+1}q^{p-j+1}y^j-\frac{(p-j)(p-j-1)}{2} X_s^{p-j}q^{p-j}y^{j+1}	\right).
\end{align*}
As the $p$-fold action of the raising operator on $\psi$ is homogeneous of degree $p$, the second term of the raising operator gives the following contribution: 
\begin{align*}
-i(p+1)(2p+1) \left(\sum_{j=0}^{p}\gamma_j^{(p)}y^{j+1}X_s^{p-j}q^{p-j}\right)\ .
\end{align*}
As for the term $iX_sq(2\mathbb{E}+1)$, we get a contribution of the form 
\begin{align*}
& i(2p+1) \sum_{j=0}^{p}\gamma_j^{(p)}y^jX_s\left(X_s^{p-j}q+[q,X_s^{p-j}]\right)q^{p-j}\\
=& i(2p+1) \sum_{j=0}^{p}\gamma_j^{(p)}\left(	X_s^{p-j+1}q^{p-j+1}y^j-(p-j)X_s^{p-j}q^{p-j}y^{j+1}	\right)\ ,
\end{align*}
where we have again used Lemma \ref{comm}. Putting all these terms together and rearranging the summations then indeed leads to the desired result. 
\end{proof}
\noindent
Now that we have found the coefficients $\gamma_j^{(p)}$, we can try to rewrite them in such a way that our polynomial symplectic spinors can be expressed in terms of known special functions. In view of the fact that 
\begin{align*}
(2j-1)!!  {p + j \choose 2j} &= \frac{(2j)!}{2^j j!}\frac{(p+j)!}{(2j)!(p-j)!} =\frac{(p+j)!}{(p-j)!j!}\frac{1}{2^j}\ ,
\end{align*}
one may recognise the coefficients of the so-called Bessel polynomials (see \cite{FK}). These are given by 
\begin{align}
\beta_p(x) := \sum_{j = 0}^p \frac{(p+j)!}{(p-j)!j!}\left(\frac{x}{2}\right)^j \qquad (p \in \Z^+)\ .
\end{align}
So in a sense, whereas the Gegenbauer polynomials appear naturally in the classical framework, one ends up with Bessel polynomials in the symplectic framework. Bringing everything together, we have thus proved the following: 
\begin{cor}
The repeated action of the undeformed raising operator $Z^{[2,0]}$ on a symplectic spinor $\psi \in \mathcal{S}(\mathbb{R})$ is given by 
\begin{align*}
\big(Z^{[2,0]}\big)^p{\psi} &= (-i)^p p!\left(\sum_{j=0}^{p}B(p,j)y^jX_s^{p-j}q^{p-j}\right){\psi},
\end{align*}
where $B(p,j)$ stands for the coefficients appearing in the Bessel polynomials: 
\begin{align*}
B(p,j)=\frac{(p+j)!}{(p-j)!j!}\frac{1}{2^j}.
\end{align*}
\end{cor}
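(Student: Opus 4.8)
The statement to be proved is the corollary describing the $p$-fold action of the undeformed raising operator $Z^{[2,0]}$ in dimension $m=2$, rewritten in terms of the Bessel polynomial coefficients. The plan is to invoke the preceding theorem, which already establishes the closed form
\[
\big(Z^{[2,0]}\big)^p\psi = \left(\sum_{j=0}^p \gamma_j^{(p)} y^j X_s^{p-j} q^{p-j}\right)\psi,\qquad \gamma_j^{(p)} = (-i)^p p!\,(2j-1)!!\binom{p+j}{2j},
\]
and then perform a purely combinatorial simplification of the coefficients $\gamma_j^{(p)}$. Since the hard analytic content (the induction over $p$, the commutator bookkeeping via Lemma \ref{comm}) is already contained in that theorem, this corollary is essentially a bookkeeping statement.

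First I would factor out the $p$-independent-of-$j$ part: write $\gamma_j^{(p)} = (-i)^p p!\,c_j^{(p)}$ with $c_j^{(p)} = (2j-1)!!\binom{p+j}{2j}$. The goal is then to show $c_j^{(p)} = B(p,j) = \frac{(p+j)!}{(p-j)!\,j!}\cdot\frac{1}{2^j}$. Next I would expand the double factorial using the standard identity $(2j-1)!! = \frac{(2j)!}{2^j\,j!}$ (valid for $j\in\Z^+$, with the convention $(-1)!! = 1$ covering the $j=0$ term), and expand the binomial coefficient as $\binom{p+j}{2j} = \frac{(p+j)!}{(2j)!\,(p-j)!}$. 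Multiplying these together, the factor $(2j)!$ cancels, leaving exactly $\frac{(p+j)!}{2^j\,j!\,(p-j)!}$, which is $B(p,j)$. This is precisely the chain of equalities already displayed in the excerpt just before the statement of the corollary, so the verification is immediate.

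Finally I would substitute back: $\gamma_j^{(p)} = (-i)^p p!\,B(p,j)$, and pull the constant $(-i)^p p!$ outside the sum to obtain
\[
\big(Z^{[2,0]}\big)^p\psi = (-i)^p p!\left(\sum_{j=0}^p B(p,j)\, y^j X_s^{p-j} q^{p-j}\right)\psi,
\]
which is the claimed formula. I would also remark that the sum $\sum_{j=0}^p B(p,j)\,t^j = \beta_p(2t)$ recovers the Bessel polynomial $\beta_p$ evaluated appropriately, connecting the symplectic monogenics to this classical family exactly as the Gegenbauer polynomials arise in the orthogonal Fueter theorem.

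\textbf{Main obstacle.} There is essentially no obstacle: the only point requiring any care is the treatment of the boundary term $j=0$, where $(2\cdot 0 - 1)!! = (-1)!!$ must be interpreted as $1$ so that the identity $(2j-1)!! = (2j)!/(2^j j!)$ holds uniformly; once that convention is fixed, the corollary is a one-line consequence of the previous theorem and the displayed factorial identity.
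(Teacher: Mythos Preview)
Your argument is correct and is exactly the route the paper takes: the corollary is derived from the preceding theorem by the displayed identity $(2j-1)!!\binom{p+j}{2j} = \frac{(2j)!}{2^j j!}\cdot\frac{(p+j)!}{(2j)!(p-j)!} = \frac{(p+j)!}{(p-j)!\,j!\,2^j}$, followed by pulling out $(-i)^p p!$. One small slip in your closing aside: with the paper's convention $\beta_p(x) = \sum_j \frac{(p+j)!}{(p-j)!j!}(x/2)^j$, one has $\sum_j B(p,j)\,t^j = \beta_p(t)$, not $\beta_p(2t)$.
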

\begin{rem}
Note that if one replaces $(y,q) \mapsto (y_j,q_j)$ and the symplectic `variable' $X_s$ by its counterpart in $m = 2n$ dimensions, then the expression above can also be used for the repeated action of the deformed operator $Z_j^{[m,n-1]}$ acting on an arbitrary symplectic spinor $\psi \in \mcS(\R^n)$. 
\end{rem}
\noindent
The remark above, together with the corollary, enables us to give our final version of the symplectic Fueter theorem (the `strong' version, in which the substitution is seen to make sense). Note that we hereby make a particular choice for the symplectic spinor, with $\psi(q) = \exp(-\frac{1}{2}q^2)$. 
\begin{thm}[Symplectic Fueter Theorem]
If we define the symplectic spinor $M_\ell(X_s;q_1)\psi(q_1)$ as
\begin{align}
M_\ell(X_s;q_1)\psi(q_1) &:= (-i)^\ell \ell!\left(\sum_{j=0}^{\ell}B(\ell,j)y^jX_s^{\ell-j}q^{\ell-j}\right)[e^{-\frac{1}{2}q_1^2}]\ ,
\end{align}
a formal substitution leads to 
\begin{align}
D_s^{2n-2}\bigg(M_\ell(X_s;q_1)\psi(\boldsymbol{q})\bigg) \in \left(\mathsf{Pol}(\R^{2n})\otimes\mcS(\R^n)\right) \cap \ker D_s\ .
\end{align}
We hereby stress that in this last formula, $X_s$ denotes the symplectic (formal) variable in $m = 2n$ dimensions. 
\end{thm}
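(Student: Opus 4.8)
The plan is to observe that, once the preceding Corollary (and the Remark following it) are in hand, the statement is just the assembly of three facts: the closed Bessel-coefficient form of the repeated raising in $m=2$, the fact that this form is \emph{dimension-independent}, and Theorem~\ref{sfueter}.

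First I would fix the deformation. By~(\ref{keuze}) the undeformed raising operator $Z_1^{[2,0]}$ in $m=2$ is formally the deformed operator $Z_1^{[m,n-1]}$ in $m=2n$, so the relevant parameter is $k=n-1$. Reading $X_s$ from now on as the genuine $2n$-dimensional operator $\sum_{j=1}^n\big(y_j\partial_{q_j}+ix_jq_j\big)$ and keeping $(y,q)=(y_1,q_1)$, the Corollary above together with its Remark identifies
\begin{align*}
M_\ell(X_s;q_1)\psi(\boldsymbol q)&=(-i)^\ell\ell!\left(\sum_{j=0}^\ell B(\ell,j)\,y_1^j\,X_s^{\ell-j}\,q_1^{\ell-j}\right)\psi(\boldsymbol q)\\
&=\big(Z_1^{[m,n-1]}\big)^\ell\psi(\boldsymbol q).
\end{align*}
This is precisely what the formal substitution means: the right-hand side of the theorem, re-read in $2n$ variables, is the $\ell$-fold action of the deformed raising operator on the chosen Gaussian spinor.

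Next I would note that $\psi(\boldsymbol q)=e^{-\frac12\boldsymbol q^2}=\prod_{j=1}^n e^{-\frac12 q_j^2}$ is a constant, hence $0$-homogeneous, polynomial symplectic spinor, and since $D_s=\sum_j\big(iq_j\partial_{y_j}-\partial_{x_j}\partial_{q_j}\big)$ annihilates everything independent of $\ux,\uy$, it lies in $\mathcal{M}_0^s\subseteq\ker D_s\subseteq\ker D_s^{2n-1}$. Then I would invoke Theorem~\ref{sfueter} with $k=n-1$: it gives $Z_1^{[m,n-1]}\in\mathsf{End}\big(\ker D_s^{2n-1}\big)$, whence $M_\ell(X_s;q_1)\psi(\boldsymbol q)=\big(Z_1^{[m,n-1]}\big)^\ell\psi(\boldsymbol q)\in\ker D_s^{2n-1}$. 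Since $D_s^{2n-1}u=0$ forces $D_s\big(D_s^{2n-2}u\big)=0$, applying $D_s^{2n-2}$ lands the result in $\ker D_s$. That it also lies in $\Pol(\R^{2n})\otimes\mcS(\R^n)$ is immediate: applying $X_s$, multiplication by $y_1$ and $q_1$, and $D_s^{2n-2}$ to a Gaussian produces a finite sum $\sum_\alpha p_\alpha(\ux,\uy)\otimes\big(\boldsymbol q^\alpha e^{-\frac12\boldsymbol q^2}\big)$ with $p_\alpha$ polynomial and $\boldsymbol q^\alpha e^{-\frac12\boldsymbol q^2}\in\mcS(\R^n)$.

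The one genuinely delicate point — and the reason the Corollary and its Remark precede this theorem — is the first step: I must be certain that the expansion computed in $m=2$ transplants verbatim to $m=2n$. This holds because the induction establishing the closed form used only the commutators of Lemma~\ref{comm} (valid in every dimension), the relation $\partial_{x_1}\psi=0$ (valid since $\psi$ depends on $\boldsymbol q$ alone), and bookkeeping of homogeneity degrees; and because substituting $k=n-1$ makes the explicit $n$-dependence inside $Z_1^{[m,\alpha]}$ cancel, so that the coefficients $\gamma_j^{(\ell)}=(-i)^\ell\ell!\,B(\ell,j)$ come out free of $n$. Granting this, the remainder is routine.
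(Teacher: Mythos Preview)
Your proposal is correct and follows essentially the same route as the paper: the theorem is presented there as an immediate consequence of the Corollary (the closed Bessel-coefficient form of $\big(Z^{[2,0]}\big)^\ell\psi$), the Remark after it (that this form is dimension-independent upon replacing $(y,q)\mapsto(y_1,q_1)$ and $X_s$ by its $2n$-dimensional version), and Theorem~\ref{sfueter} with $k=n-1$. Your write-up is in fact more explicit than the paper's, which simply states that ``the remark above, together with the corollary, enables us to give our final version'' without spelling out the assembly.
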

\noindent
To make the analogy with the classical case even more apparent, we will use some tricks to reformulate this theorem slightly. To do so, we first introduce the following notation: 
\begin{align}
(X_s q_j)^{\ast a} := X_s^a q_j^a \qquad (a \in \Z)\ .
\end{align}
Note that this is a formal notation only, because we allow $a \in \Z$ to be a negative integer here. Although this does not make sense (recall that $X_s$ is an operator), these negative powers will never really occur (see below). We have choosen for the notation `$\ast$' here, but one may think of this as the so-called normal ordering operator appearing in the physics literature. Indeed, identifying $(X_s,q_j)$ with $(b^\dagger,b)$ and switching to the notation used in physics (with e.g. $:bb^\dagger:\ = b^\dagger b$), one clearly has that 
\[ (X_s q_j)^{\ast a} =\ :X_s q_j X_s q_j \ldots X_s q_j:\ = X_s^aq_j^a\ . \]
The $\ast$-multiplication (or normal ordering) of two operators of the form $(X_s q_j)^{\ast a}$ is then defined by means of 
\[ (X_s q_j)^{\ast a} \ast (X_s q_j)^{\ast b} := (X_s q_j)^{\ast (a + b)} \qquad (a,b \in \Z)\ .  \]
In terms of this notation, the symplectic Fueter theorem can be formulated as follows: 
\begin{thm}
For all $\ell \in \Z^+$, one has that 
\begin{align*}
D_s^{2n-2}\bigg((X_sq_j)^{\ast \ell}\ast\beta_\ell\left(t\right)\bigg) \in \ker D_s\ ,
\end{align*}
where the powers of the `variable' $t$ are defined by means of 
\[ t^a = \left(\frac{y}{X_s q_j}\right)^a := y^a(X_sq_j)^{\ast(-a)} = y^aX_s^{-a}q_j^{-a}\ . \]
\end{thm}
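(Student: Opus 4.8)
The plan is to unwind the normal-ordering notation and recognise the bracketed expression as (a scalar multiple of) the symplectic spinor $M_\ell(X_s;q_1)\psi(\boldsymbol q)$ already treated, so that the statement becomes a reformulation of the Symplectic Fueter Theorem. As there, the bracket is understood to act on a fixed symplectic spinor $\psi \in \mcS(\R^n)$ (for concreteness $\psi(\boldsymbol q) = e^{-\frac12 \boldsymbol q^2}$, though any $\psi$ works). First I would expand $(X_sq_j)^{\ast\ell}\ast\beta_\ell(t)$. Writing $\beta_\ell(t) = \sum_{k=0}^\ell B(\ell,k)t^k$ and using $t^k = y^k(X_sq_j)^{\ast(-k)}$ together with $(X_sq_j)^{\ast\ell}\ast(X_sq_j)^{\ast(-k)} = (X_sq_j)^{\ast(\ell-k)}$, one gets
\[
(X_sq_j)^{\ast\ell}\ast\beta_\ell(t) = \sum_{k=0}^{\ell}B(\ell,k)\,y^kX_s^{\ell-k}q_j^{\ell-k}.
\]
The one point needing care is that since $\beta_\ell$ has degree exactly $\ell$, every exponent $\ell-k$ on the right is $\geq 0$, so the formal negative powers introduced by $t^{k}$ always cancel and the right-hand side is a genuine differential operator on $\mathsf{Pol}(\R^{2n})\otimes\mcS(\R^n)$.

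Next I would identify this operator with an iterate of a deformed raising operator. By the Corollary following the computation of the coefficients $\gamma_j^{(p)}$, the $\ell$-fold action of the undeformed raising operator on a spinor is controlled precisely by the Bessel coefficients $B(\ell,k)$; and by the Remark immediately after it, performing the substitution $(y,q)\mapsto(y_j,q_j)$ and reading $X_s$ as the operator in $m = 2n$ dimensions turns this same expression into the $\ell$-fold action of the $(n-1)$-deformed raising operator $Z_j^{[m,n-1]}$ on an arbitrary $\psi\in\mcS(\R^n)$. Hence
\[
\Big((X_sq_j)^{\ast\ell}\ast\beta_\ell(t)\Big)\psi = \frac{1}{(-i)^\ell\ell!}\big(Z_j^{[m,n-1]}\big)^\ell\psi.
\]
Now apply Theorem \ref{sfueter} with $k = n-1$: it gives $Z_j^{[m,n-1]}\in\mathsf{End}\big(\ker D_s^{2n-1}\big)$. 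Since $\psi$ does not depend on $(\ux,\uy)$ it lies in $\ker D_s \subset \ker D_s^{2n-1}$, so $(Z_j^{[m,n-1]})^\ell\psi\in\ker D_s^{2n-1}$; and $D_s^{2n-2}$ applied to any element of $\ker D_s^{2n-1}$ lands in $\ker D_s$. Since the scalar $(-i)^\ell\ell!$ is irrelevant to kernel membership, this is exactly the claim. (Equivalently, one reads the conclusion directly off the Symplectic Fueter Theorem, of which the present statement is just the normal-ordered repackaging.)

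There is no deep obstacle here: in view of the preceding results the theorem is essentially a notational restatement. The only thing to be careful about is the legitimacy of the formal substitution — that the symbol $X_s$ may be read first in $m = 2$ and then in $m = 2n$ — which is precisely the content of the strong form of the Symplectic Fueter Theorem. Its validity rests on the observation, already used above, that the operator $\sum_k B(\ell,k)y^kX_s^{\ell-k}q_j^{\ell-k}$ is written entirely in terms of the concrete generators $x_j,y_j,q_j,\partial_{q_j}$, so it has an unambiguous meaning in every dimension; and that passing from $q_1\in\R$ to $\boldsymbol q\in\R^n$ amounts to multiplying the chosen spinor by $\prod_{i\geq 2}\psi(q_i)$, a factor left untouched by the operator $\partial_{q_1}$ hidden inside $X_s$.
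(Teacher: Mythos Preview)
Your proposal is correct and follows exactly the route the paper intends: the theorem is stated in the paper as a notational reformulation of the Symplectic Fueter Theorem, and your argument simply makes this explicit by expanding the $\ast$-notation, recognising the Bessel coefficients from the Corollary, and invoking Theorem~\ref{sfueter} with $k=n-1$. The paper itself gives no separate proof beyond introducing the normal-ordering conventions and asserting the restatement, so your write-up is in fact more detailed than what appears there.
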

\noindent
This may look ill-defined, but as mentioned above these negative powers of $(X_s q_j)$ never really occur, in view of the factor $(X_sq_j)^{\ast \ell}$ appearing in front of the Bessel polynomial. 


\nocite{*}
\bibliographystyle{alpha}

\end{document}